\numberwithin{equation}{section}
\theoremstyle{plain}
\newtheorem{thm}{Theorem}[section]
\newtheorem{lemma}{Lemma}[section]
\newtheorem{proposition}{Proposition}[section]
\newtheorem{definition}{Definition}[section]
\newtheorem{remark}{Remark}[section]
\begin{document}

\include{Safikhani_Reference.bib}

\begin{frontmatter}

\title{Spectral Conditions for Equivalence of Gaussian Random Fields with Stationary Increments  
\thanks{The research is supported in part by NSF grants DMS-1612885 and DMS-1607089.}
}
\runtitle{Equivalence of Gaussian Random Fields   }

\begin{aug}
\author{\fnms{Abolfazl} \snm{Safikhani}\ead[label=e1]{as5012@columbia.edu}},
\author{\fnms{Yimin} \snm{Xiao}\ead[label=e2]{xiao@stt.msu.edu}}


\affiliation{Department of Statistics\\Columbia University}

\address{Department of Statistics,\\
Columbia University\\
1255 Amsterdam Ave, 928\\
New York, NY 10027\\
\printead{e1}\\
\phantom{E-mail:\ }}


\address{Department of Statistics and Probability,\\
Michigan State University\\
619 Red Cedar Road, C413 Wells Hall\\
East Lansing, MI 48824-1027\\
\printead{e2}\\ }
\end{aug}



\begin{abstract}

This paper studies the problem of equivalence of Gaussian measures induced by Gaussian random fields (GRFs) with stationary increments and proves a 
sufficient condition for the equivalence in terms of the behavior of the spectral measures at infinity. The main results extend those of Stein (2004), Van Zanten (2007, 2008)
and are applicable to a rich family of nonstationary space-time models with possible anisotropy behavior. 

\end{abstract}

\end{frontmatter}

\section{Introduction}

Space-time models have become increasingly popular in scientific studies such as geology, climatology, geophysics, environmental 
and atmospheric sciences, etc (\cite{Chiles_2012}, \cite{Cressie_1993} and \cite{Stein_1999}). Gaussian random fields (GRFs) are 
ubiquitous in space-time modeling due to the prevalence of central limit theorems and the mathematical/computational amenability 
of the multivarite normal distributions. Most of the parametric models proposed in the literature are GRFs with specific parametric 
covariance structure (see  \cite{Cressie_Huang_1999},  \cite{Gneiting_2002},  \cite{Stein_2005}, and \cite{Gneiting_Genton_Guttorp_2007} 
for rich families of space-time covariance functions). One of the main objectives in statistics then is to find consistent estimates for 
the parameters,  and finally use them for prediction of the underlying random field at unobserved locations. Given a parametric family 
of Gaussian random fields, an important question is to determine whether all the parameters are consistently estimable. First step to 
answer this question demands an investigation on the equivalence or singularity of the corresponding Gaussian measures  induced 
by this family of GRFs on their space of sample functions, since if two sets of parameters in the Gaussian models give equivalent 
Gaussian distributions, then it is impossible to find consistent estimators for these parameters involved regardless of the method 
chosen for estimation (see for example \cite{Zhang_2004}  for a discussion on inconsistent estimation in Mat\'ern covariance 
functions under the framework of fixed domain asymptotics). Another  application of equivalence of Gaussian measures comes 
from covariance structure misspecification, and its effect on spatial interpolation (see \cite{Stein_1999}). 
Therefore, finding explicit conditions for deciding whether two Gaussian random fields induce
equivalent Gaussian distributions on their spaces of sample functions has direct impact 
in evaluating the prediction error in interpolation of spatial data, and thus proving asymptotically optimal prediction under misspecified 
covariance structure. There are other applications of equivalence and perpendicularity of GRFs in spatial modeling. For example, we 
refer 
to \cite{Furrer_Genton_Nychka_2006}, \cite{Kaufman_Schervish_Nychka_2008}, 
\cite{safikhanicovariance} and \cite{safikhani2015nonstationary} for the application in covariance tapering.

Equivalence of Gaussian measures is a classical problem in probability theory that has been studied since the 1950's. 
We refer to the books \cite{Gikhman_Skorokhod_2004}, \cite{Ibragimov_Rozanov_1978}, \cite{Yadrenko_1983}, 
\cite{mandrekar2015stochastic} and references therein for systematic accounts. Necessary and sufficient conditions 
for the equivalence of Gaussian measures induced by stationary Gaussian processes in terms of their mean and 
covariance functions are given in \cite{Ibragimov_Rozanov_1978}. Their extensions to stationary isotropic Gaussian 
random fields are proved in \cite{Skorokhod_Yadrenko_1973} and \cite{Yadrenko_1983}. Among these results, the 
explicit criterion for equivalence of stationary Gaussian processes in terms of their spectral densities  (cf. Theorem 17 
on p. 104 of \cite{Ibragimov_Rozanov_1978}) is particularly convenient to apply.  This criterion has been extended 
by \cite{Skorokhod_Yadrenko_1973} and \cite{Yadrenko_1983} to stationary isotropic Gaussian random fields.

However, investigation on the equivalence of nonstationary GRFs has been limited to some special cases. For instance, 
we refer to \cite{Cheridito_2001} on mixed fractional Brownian motion, \cite{Baudoin_Nualart_2003} on Volterra processes,
\cite{Sottinen_Tudor_2006} on Gaussian random fields that are equivalent to fractional Brownian sheets, \cite{Stein_2004} 
on a family of intrinsic random functions with power law generalized covariance functions (including fractional Brownian fields),
\cite{VanZanten_2007, VanZanten_2008} on Gaussian processes with stationary increments, and \cite{Xue_2011} on certain 
Gaussian random fields with stationary increments.  

Our work is mainly motivated by \cite{Stein_2004} and by \cite{VanZanten_2007, VanZanten_2008} 
where explicit sufficient conditions for the equivalence of Gaussian processes with stationary increments in terms 
of their spectral densities similar to the criterion in \cite{Ibragimov_Rozanov_1978} for the stationary case have
been established. The main purpose of this paper is to extend their results to the setting of Gaussian random fields 
with stationary increments which may have different regularities in each direction. Besides of theoretical interest, 
our results are applicable to anisotropic nonstationary space-time Gaussian models.

The rest of the paper is organized as follows. We start Section \ref{pr} by introducing some useful Hilbert spaces connected to 
the frequency domain, and study their structure. In Section \ref{main results}, we state the main result of the paper, which is 
sufficient conditions for equivalence of GRFs with stationary increments using the tail behavior of their spectral densities. In the 
last section, we apply the main results to a rich family of anisotropic nonstationary spatio-temporal Gaussian models.

\section{Preliminary}\label{pr}

Let $ X = \lbrace X_t : t \in \mathbb{R}^{d}\rbrace $ be a centered GRF with stationary increments. The covariance structure of $X$ 
is fully described by \cite{Yaglom_1957}. For simplicity, we assume that $ X(0)=0$
and the covariance function of $X$ can be written as 
\begin{equation}\label{spectral representation}
C(t,s)= \mathbf{E} (X_t X_s) = \int_{\mathbb{R}^{d}}\big( e^{i\langle t,\lambda \rangle} - 1\big)
\big(e^{-i\langle s, \lambda \rangle} - 1\big)\,F(d\lambda), 
\end{equation}
where $ F $ is a non-negative symmetric measure on $ \mathbb{R}^{d}\setminus\lbrace 0 \rbrace $, called the 
spectral measure of $ X $, that satisfies
\begin{equation}\label{Con}
	\int_{\mathbb{R}^{d}} {\frac{{\vert \lambda \vert}^{2}}{1+{\vert \lambda \vert}^{2}}} F(d\lambda)<\infty.
\end{equation}
It follows from (\ref{spectral representation}) that $X$ has the following spectral representation:
\begin{equation}
X(t) \,{\buildrel d \over =}\, \int_{\mathbb{R}^{d}}\!\big(e^{i\langle t,\lambda \rangle} - 1\big)\,W(d\lambda), 
\end{equation}
where $ W $ is a complex-valued Gaussian random measure with mean 0 and control measure $ F $.

If the spectral measure $F$ is absolutely continuous with respect to the Lebesgue measure on $ \mathbb{R}^d $, 
we will call its Radon-Nikodym derivative, denoted by $ f(\lambda) $, the spectral density of $X$. We will give conditions 
for the equivalence of GRFs with stationary increments in terms of their spectral densities, but first, we recall the 
definition of equivalence of GRFs.  

\begin{definition}
For a fixed set $ D \subseteq \mathbb{R}^d $, we call two GRFs $ X = \lbrace X_t : t \in \mathbb{R}^{d}\rbrace $, 
$ Y = \lbrace Y_t : t \in \mathbb{R}^{d}\rbrace $ \textit{equivalent on $D$} if they induce equivalent measures\footnote{Two 
measures defined on the same measurable space are called equivalent if they are mutually absolutely continuous with 
respect to each other.} on 
the measurable space $ \left( \mathbb{R}^D, \mathcal{B}\left( \mathbb{R}^D \right) \right) $, in which 
$ \mathcal{B}\left( \mathbb{R}^D \right) $ is the $ \sigma $-field generated by the cylinder subsets\footnote{A 
cylinder subset of $\mathbb{R}^D$ is a set of the form $\{f \in \mathbb{R}^D:\, f(t^1) \in B_1, \ldots,  f(t^n) \in B_n\}$, where 
$t^1, \ldots, t^n \in D$ and $B_1, \ldots, B_n$ are Borel sets in $\mathbb R$.}
of $\mathbb{R}^D$. 
Moreover, we call $X$ and $Y$ \textit{locally equivalent} if they are equivalent on all bounded subsets of $\mathbb{R}^d $. 
\end{definition}

The spectral representation \eqref{spectral representation} makes an important bridge between the problem of 
equivalence of GRFs and the description of the space generated by the linear combinations of the kernel functions.  
For that purpose, in this section we 
define for a fixed and bounded set $ D \subseteq \mathbb{R}^d $, an incomplete Hilbert space $ \mathcal{L}^e_D 
= \mbox{span} \left\lbrace e_t(\lambda):= e^{i\langle t, \lambda \rangle} - 1: t \in D \right\rbrace $ with the inner product 
$$ {\langle e_t, e_s \rangle}_F = \int_{\mathbb{R}^d}\!e_t(\lambda)\overline{e_s(\lambda)}\,F(d\lambda). $$ 
We denote the closure  of $ \mathcal{L}^e_D $ in $ L^2(F) $ by $ \mathcal{L}_D(F) $. Also, for $ T > 0 $, denote by
 $\Pi_T  = [-T,T]^d$  the cube with side $2T$. 

Observe that the functions in $ \mathcal{L}^e_D$ are entire functions defined on $ \mathbb{C}^d$ (see \cite{Ronkin_1974} 
for definition and more properties), and they are of finite exponential type.  Recall that an entire function $\varphi$ on 
$\mathbb{C}^d $ is called of finite exponential type if 
\[
\limsup_{r \to \infty}  \frac 1 r \max_{\|z\| = r} \log \big|\varphi(z)\big| < \infty.
\]
However, in general, the elements in the completed Hilbert space, $ \mathcal{L}_D(F) $, may not have the same properties 
as the functions in $ \mathcal{L}^e_D$.  
This problem is discussed in details in \cite{Pitt_1973, Pitt_1975}. 
In this paper, we assume that the spectral measure $F $ has a  density function $f(\lambda)$ that satisfies the following 
condition: 
\begin{verse}
$ {\rm (C1)}$ \, There exist constants $ c, k, \eta > 0 $ such that $ f(\lambda) \geq \frac{c}{{\vert \lambda \vert}^{\eta}}$
for all $\lambda \in \mathbb R^d$ with $ \vert \lambda \vert > k$.  
\end{verse}

This assumption on the spectral density will imply the elements in $ \mathcal{L}_D(F) $ to be entire functions of finite 
exponential type. These properties enable us to apply the Paley-Wiener type theorems to get nice description of the 
elements in the Hilbert space $ \mathcal{L}_D(F) $ for $D = \Pi_T$. 

The next two lemmas will prove these statements. The following lemma is taken 
from \cite{Xiao_2007} and we state it here again for completeness.

\begin{lemma}\label{xiao}
Suppose that the spectral density $ f $ satisfies (C1). Then for fixed $T>0 $, there exists positive constants $ C $ 
and $ M $ such that for all functions $ \phi $ of the form 
\begin{equation}\label{elem}
\phi (\lambda) = \sum_{k=1}^{n} {a_k \big( e^{i\langle t^k, \lambda \rangle} - 1 \big)},
\end{equation}
where $ a_k \in \mathbb{R} $ and $ t^k \in \Pi_T $, we have for all $ z \in \mathbb{C}^d $ 
\begin{equation}\label{entire}
\vert \phi(z) \vert \leq C\, {\Vert \phi \Vert}_F \, \exp\lbrace M \vert z \vert \rbrace. 
\end{equation}
Moreover, for fixed $ C_1>0 $, there exists a positive constant $ C_2 $ such that for all functions of the form 
\eqref{elem}, we have 
\begin{equation}\label{Shwartz}
\vert \phi(z) \vert \leq C_2 \, \vert z \vert \, {\Vert \phi \Vert}_F
\end{equation}
for all $ \vert z \vert \leq C_1 $.  
\end{lemma}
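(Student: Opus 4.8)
The plan is to use that every $\phi$ of the form \eqref{elem} extends to an entire function on $\mathbb{C}^d$ of exponential type at most $\tau:=\sqrt d\,T$ — since each $e^{i\langle t^k,\lambda\rangle}-1$ has type $\le|t^k|\le\sqrt d\,T$ and $|e^{i\langle t^k,z\rangle}-1|\le 2e^{|t^k|\,|z|}$ — and to reduce \eqref{entire} to a polynomial bound on the real axis, which in turn comes from (C1) together with a reproducing representation of $\phi$. Then \eqref{Shwartz} will follow from \eqref{entire} because $\phi(0)=0$.

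First I would fix, once and for all (depending only on $T$ and $d$), a Schwartz function $m$ on $\mathbb{R}^d$ whose Fourier transform is smooth, compactly supported, and identically $1$ on $\Pi_T$. Since the $\lambda$-Fourier transform of $\phi$ is the finite measure $\sum_k a_k(\delta_{t^k}-\delta_0)$, supported in $\Pi_T$, one gets the reproducing identity $\phi(u)=\int_{\mathbb{R}^d}\phi(\lambda)\,m(u-\lambda)\,d\lambda$ for all $u\in\mathbb{R}^d$ (and, with $\phi(z)=\int_{\mathbb{R}^d}\phi(z-\mu)m(\mu)\,d\mu$, for complex $z$ as well). Splitting this integral over $\{|\lambda|>k\}$ and $\{|\lambda|\le k\}$ and applying Cauchy–Schwarz to the outer part with the weight supplied by (C1),
\[
\Big(\tfrac1c\int_{|\lambda|>k}|\phi(\lambda)|^2 f(\lambda)\,d\lambda\Big)^{1/2}\Big(\int_{\mathbb{R}^d}|\lambda|^{\eta}|m(u-\lambda)|^2\,d\lambda\Big)^{1/2}\le C\,(1+|u|)^{\eta/2}\,\|\phi\|_F,
\]
where the second factor is finite and $O((1+|u|)^{\eta/2})$ because $m$ is Schwartz. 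This already gives the desired polynomial-in-$|u|$ bound on $\mathbb{R}^d$, controlled by $\|\phi\|_F$, modulo the inner piece, which is at most $\|m\|_\infty\,\mathrm{vol}(B(0,k))^{1/2}\,\|\phi\|_{L^2(\{|\lambda|\le k\})}$.

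The main obstacle is precisely to control $\|\phi\|_{L^2(\{|\lambda|\le k\})}$ by $\|\phi\|_F$; equivalently, one needs the uniform ``no-concentration'' estimate, valid for every entire $\phi$ of exponential type $\le\tau$,
\[
\int_{|\lambda|\le k}|\phi(\lambda)|^2\,d\lambda\ \le\ C(\tau,d,k)\int_{k<|\lambda|<2k}|\phi(\lambda)|^2\,d\lambda\ \le\ \frac{C(\tau,d,k)(2k)^{\eta}}{c}\,\|\phi\|_F^2,
\]
the last inequality being immediate from (C1). This rigidity — that the values of a function of exponential type on a ball are dominated by its values on an adjacent annulus — is exactly where one uses the band-limited structure beyond mere analyticity, and it is also why (C1), a hypothesis on the tail $\{|\lambda|>k\}$, is the right one; it follows from quantitative forms of the Paley–Wiener/Plancherel–Pólya theorems (of the kind discussed in \cite{Pitt_1973, Pitt_1975}), or from a Remez/Tur\'an-type inequality for functions of exponential type with constant of order $e^{O(\tau k)}$. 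Granting it, the two pieces combine to $|\phi(u)|\le C(1+|u|)^{\eta/2}\|\phi\|_F$ for all $u\in\mathbb{R}^d$.

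Finally I would pass from $\mathbb{R}^d$ to $\mathbb{C}^d$. Since $\phi$ is entire of exponential type $\le\tau$ and is bounded by $C(1+|u|)^{\eta/2}\|\phi\|_F$ on $\mathbb{R}^d$, the Phragm\'en–Lindel\"of principle, applied one variable at a time to $\phi(z)/(z_j+i)^{\lceil\eta/2\rceil}$ on the upper and lower half-planes, yields a bound of the form $C(1+|z|)^{\eta/2}e^{M_0|z|}\|\phi\|_F$, hence \eqref{entire} with $M$ slightly larger than $M_0$ (absorbing the polynomial factor). For \eqref{Shwartz}: given $C_1$, \eqref{entire} bounds $\sup_{|w|\le C_1+1}|\phi(w)|$ by a constant multiple of $\|\phi\|_F$, so Cauchy's estimates bound each $|\partial_j\phi|$ on $\{|w|\le C_1\}$ by a constant multiple of $\|\phi\|_F$, and then $\phi(z)=\sum_j z_j\int_0^1(\partial_j\phi)(sz)\,ds$ together with $\phi(0)=0$ gives $|\phi(z)|\le C_2|z|\,\|\phi\|_F$ for $|z|\le C_1$. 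Everything here is routine except the uniform no-concentration estimate, which is the one genuinely technical point.
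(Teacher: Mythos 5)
First, note that the paper itself gives no proof of Lemma \ref{xiao}: it is quoted from \cite{Xiao_2007}, with the underlying argument going back to \cite{Pitt_1975}. So your proposal is being judged against that standard argument rather than against anything printed here. Your convolution identity $\phi=\phi*m$, the Cauchy--Schwarz treatment of the region $\{|\lambda|>k\}$ using (C1), the Phragm\'en--Lindel\"of passage from $\mathbb{R}^d$ to $\mathbb{C}^d$, and the derivation of \eqref{Shwartz} from \eqref{entire} via $\phi(0)=0$ and Cauchy estimates are all sound and consistent with that argument.

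The gap is precisely the step you yourself flag as the one technical point, and it cannot be filled in the form you state it. The claimed no-concentration estimate $\int_{|\lambda|\le k}|\phi|^2\,d\lambda\le C(\tau,d,k)\int_{k<|\lambda|<2k}|\phi|^2\,d\lambda$ is \emph{false} for the class of all entire functions of exponential type $\le\tau$: every polynomial has exponential type $0$, and in $d=1$ the function $q_n(x)=T_n\left((2x^2-5k^2)/(3k^2)\right)$, with $T_n$ the Chebyshev polynomial, satisfies $|q_n|\le1$ on $\{k\le|x|\le2k\}$ while $|q_n(0)|=\frac12(3^n+3^{-n})$, so the ratio of the two integrals grows like $9^n$. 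Worse, such polynomials are locally uniform limits on $\{|x|\le2k\}$ of elements of $\mathcal{L}^e_{\Pi_T}$ (divided differences $((e^{ihx}-1)/(ih))^j\to x^j$ as $h\to0$), so the ball-versus-adjacent-annulus domination fails even for functions of the exact form \eqref{elem}. The tools you invoke do not rescue it: Plancherel--P\'olya requires a global $L^2(\mathbb{R}^d)$ bound that $\phi$ does not possess, and Remez/Tur\'an-type inequalities for exponential sums (Nazarov) carry constants depending on the number of terms $n$, which here must be uniform.

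What actually saves the lemma is that $\|\phi\|_F$ is a \emph{global} weighted integral over $\{|\lambda|>k\}$, together with the fact that $\widehat{\phi}$ is a finite measure supported in $\Pi_T$, so that $|\phi|^2$ is a nonnegative almost periodic trigonometric sum which cannot decay at infinity; the low-frequency mass $\int_{|\lambda|\le k}|\phi|^2\,d\lambda$ must be extracted from the full tail $\int_{|\lambda|>k}|\phi|^2|\lambda|^{-\eta}\,d\lambda$ by exploiting this structure (this is how \cite{Pitt_1975} and \cite{Xiao_2007} proceed), not from any local rigidity of functions of finite exponential type. Until that step is replaced, your proof of \eqref{entire} is incomplete, and \eqref{Shwartz}, which you correctly reduce to \eqref{entire}, falls with it.
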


One can use \eqref{entire} to define the limiting functions in $ \mathcal{L}_{\Pi_T}(F) $ in such a way that they also 
satisfy both \eqref{entire} and  \eqref{Shwartz}. We will prove this in the next Lemma. 

\begin{lemma}\label{Pitt}
Suppose that the spectral density $ f $ satisfies (C1).  Then, for each $ T>0 $, the space $\mathcal{L}_{\Pi_T}(F)$ 
consists of the restriction to $ \mathbb{R}^d $ of entire functions on $ \mathbb{C}^d $ of finite exponential type. 
Moreover, \eqref{Shwartz} holds for all functions $\phi \in  \mathcal{L}_{\Pi_T}(F)$.
\end{lemma}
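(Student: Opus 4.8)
The plan is to realize every element $\phi \in \mathcal{L}_{\Pi_T}(F)$ as a limit, in $L^2(F)$, of a sequence $\phi_n$ of functions of the form \eqref{elem}, and to upgrade this $L^2$-convergence to locally uniform convergence on $\mathbb{C}^d$ via the pointwise bound \eqref{entire}. First I would fix $\phi \in \mathcal{L}_{\Pi_T}(F)$ and choose $\phi_n \in \mathcal{L}^e_{\Pi_T}$ with $\|\phi_n - \phi\|_F \to 0$; in particular $(\phi_n)$ is Cauchy in $L^2(F)$. Applying \eqref{entire} to the difference $\phi_n - \phi_m$ (which is again of the form \eqref{elem}) gives, for every $z$ in a fixed ball $\{|z| \le R\}$,
\[
\bigl|\phi_n(z) - \phi_m(z)\bigr| \le C\,\|\phi_n - \phi_m\|_F\,\exp\{MR\},
\]
so $(\phi_n)$ is uniformly Cauchy on each compact subset of $\mathbb{C}^d$. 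Hence $\phi_n \to \widetilde\phi$ uniformly on compacta for some function $\widetilde\phi$ which, as a locally uniform limit of entire functions, is itself entire on $\mathbb{C}^d$. Passing to the limit in \eqref{entire} and in \eqref{Shwartz} (both of which survive locally uniform limits, since $\|\phi_n\|_F \to \|\phi\|_F$) shows that $\widetilde\phi$ satisfies $|\widetilde\phi(z)| \le C\,\|\phi\|_F \exp\{M|z|\}$ for all $z$ and $|\widetilde\phi(z)| \le C_2\,|z|\,\|\phi\|_F$ for $|z| \le C_1$; in particular $\widetilde\phi$ is of finite exponential type.

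The remaining point — and the one that needs care — is to identify $\widetilde\phi\big|_{\mathbb{R}^d}$ with $\phi$ as an element of $L^2(F)$, i.e. to check that this limiting entire function does not depend on the approximating sequence and genuinely represents $\phi$. Here is where condition (C1) is essential: since $\phi_n \to \phi$ in $L^2(F)$, some subsequence converges to $\phi$ $F$-a.e., hence Lebesgue-a.e. on $\{|\lambda| > k\}$ because $f(\lambda) \ge c|\lambda|^{-\eta} > 0$ there; but that same subsequence converges pointwise everywhere to $\widetilde\phi$, so $\widetilde\phi = \phi$ a.e. on $\{|\lambda| > k\}$, and two $L^2(F)$ elements agreeing a.e. outside a bounded set can still differ on $\{|\lambda| \le k\}$ — so one must also argue there. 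I would handle the bounded region by the same a.e.-convergence argument applied to the part of $F$ that is absolutely continuous (which is all of $F$, by hypothesis $F(d\lambda) = f(\lambda)\,d\lambda$): $F$-a.e. convergence of a subsequence is Lebesgue-a.e. convergence on all of $\mathbb{R}^d$ once we note $F \ll \mathrm{Leb}$, hence $\widetilde\phi = \phi$ Lebesgue-a.e., hence $F$-a.e., and therefore as elements of $L^2(F)$. This also shows the limit is independent of the chosen sequence.

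Finally, assembling the pieces: every $\phi \in \mathcal{L}_{\Pi_T}(F)$ has a representative that is (the restriction to $\mathbb{R}^d$ of) an entire function of finite exponential type, and \eqref{Shwartz} holds for it; conversely each such representative lies in $\mathcal{L}_{\Pi_T}(F)$ by construction as an $L^2(F)$-limit of elements of $\mathcal{L}^e_{\Pi_T}$. I expect the main obstacle to be precisely the bookkeeping in the previous paragraph: making sure the locally uniform limit $\widetilde\phi$ and the $L^2(F)$-limit $\phi$ are the same object, which forces one to invoke $F \ll \mathrm{Leb}$ together with the strict positivity from (C1) so that "$F$-almost everywhere" and "everywhere, via continuity of $\widetilde\phi$" can be reconciled on all of $\mathbb{R}^d$ rather than merely away from the origin. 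Once that identification is in place, the exponential-type bound and \eqref{Shwartz} follow immediately by continuity from Lemma \ref{xiao}.
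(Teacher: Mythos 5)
Your proposal is correct and follows essentially the same route as the paper's proof: approximate $\phi$ by elementary functions, use \eqref{entire} on differences to get uniform Cauchyness on compacta, invoke the Weierstrass theorem to conclude the locally uniform limit is entire, identify it with $\phi$ via an $F$-a.e.\ convergent subsequence, and pass the bounds \eqref{entire} and \eqref{Shwartz} to the limit. Your extra care in reconciling the $L^2(F)$-limit with the pointwise limit on the bounded region is a harmless refinement (the paper simply takes the entire representative as the chosen version of the $L^2(F)$ equivalence class, which suffices).
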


\begin{proof}
The idea of the proof is similar to \cite{Pitt_1975}, p. 304. Take a sequence $\phi_n \in \mathcal{L}^e_{\Pi_T} $, 
such that $ {\Vert \phi_n - \phi \Vert}_F \rightarrow 0 $ for some $\phi \in \mathcal{L}_{\Pi_T}(F) $. Then, it is a 
Cauchy sequence in $ \L^2(F) $, and using \eqref{entire}, we get
\begin{equation}
\vert \phi_n(z) - \phi_m(z) \vert \leq C\, {\Vert \phi_n - \phi_m \Vert}_F \, \exp\lbrace M \vert z \vert \rbrace
\end{equation}
This means for each fixed $ z \in \mathbb{C}^d $, the sequence $ \{\phi_n(z), n \ge 1\} $ is a Cauchy sequence 
in $ \mathbb{C} $. So, it is convergent and, moreover, the convergence is locally uniform. Denote the limit by 
$\widetilde{\phi}(z)$. Now, since limit in $ L^2(F) $ sense implies the almost everywhere convergence for a 
subsequence, $\phi = \widetilde{\phi} $ a.e. with respect to $F$. From now on, we will take $ \widetilde{\phi} $ 
as our favorite version of the limits of functions in $ \mathcal{L}^e_{\Pi_T} $. Therefore, the elements in the 
space $ \mathcal{L}_{\Pi_T}(F) $, are not only the $ L^2(F) $ limits of functions in $ \mathcal{L}^e_{\Pi_T}$, 
but also the pointwise limits as well. Thus, both \eqref{entire} and \eqref{Shwartz} are true for all the elements 
in $ \mathcal{L}_{\Pi_T}(F) $. The only thing left to prove is that these functions $ \widetilde{\phi}$ are entire 
functions on $\mathbb{C}^d $. But this is true since any element of the space $ \mathcal{L}_{\Pi_T}(F) $ is 
the locally uniform limit of functions of the form \eqref{elem} which are obviously entire functions, and thus, 
they are entire functions as well (This is called the Weirerstrass Theorem; see, for example, Proposition 
2.8 on p. 52 of \cite{Ebeling_2007}).      
\end{proof}

This lemma shows that if the spectral density satisfies the assumption (C1), we can complete the space 
$\mathcal{L}^e_{\Pi_T} $ in such a way that the resulting functions are locally uniform limits of entire 
functions, and hence, they are entire functions  of finite exponential type. Furthermore, since 
\eqref{entire} is true for all the elements in the Hilbert space $ \mathcal{L}_{\Pi_T}(F) $, we can see that the 
point evaluators, i.e. the functionals on $ \mathcal{L}_{\Pi_T}(F) $ of the form $ \phi \mapsto \phi(z) $ for 
each fixed $ z \in \mathbb{C}^d $ are bounded operators. Now, we can apply the Riesz Representation 
Theorem (See \cite{Halmos_1957}, Theorem 3. p. 31) to prove that the space $ \mathcal{L}_{\Pi_T}(F) $ 
is a Reproducing Kernel Hilbert Space (RKHS) in the sense of \cite{Aronszajn_1950}. This means that 
there exists a function $ K_T(\cdot , \cdot): \, \mathbb{R}^d \times \mathbb{R}^d \to  \mathbb{C} $ such that (i) 
$ K_T(\omega, \cdot) \in \mathcal{L}_{\Pi_T}(F) $ for all $ \omega \in \mathbb{R}^d $, and (ii)
for every $ \phi \in \mathcal{L}_{\Pi_T}(F) $ and $ \omega \in \mathbb{R}^d $, we have the following 
kernel property
\begin{equation}\label{RKHS}
{\left\langle \phi, K_T(\omega, \cdot)\right\rangle}_F = \int_{\mathbb{R}^d}\!\phi(\lambda)
\overline{K_T(\omega, \lambda)}\,F(d\lambda) = \phi(\omega).
\end{equation}

Also, it is worthwhile to mention that the set of all functions $ \lbrace K_T(\omega, \cdot ): \omega \in 
\mathbb{R}^d \rbrace $ is dense in $ \mathcal{L}_{\Pi_T}(F) $ (To see this, note that if $ \phi \in 
\mathcal{L}_{\Pi_T}(F) $ is orthogonal to $ K_T(\omega, \cdot ) $ for all $ \omega \in \mathbb{R}^d$, 
then $ \phi(\omega) = {\langle \phi,K_T(\omega, \cdot ) \rangle}_F = 0 $, which implies $ \phi = 0 $). 
Futhermore, for all $ \psi \in L^2(F) $, the function
\begin{equation}
\omega \mapsto {\langle \psi, K_T(\omega, .)\rangle}_F
\end{equation}  
is the orthogonal projection of $ \psi $ on $ \mathcal{L}_{\Pi_T}(F) $ (See the proof in \cite{Aronszajn_1950}, 
p. 345). We denote this projection by $ \pi_{\mathcal{L}_{\Pi_T}(F)} \psi $.

Finding explicit forms of the reproducing kernels is not an easy job. However, in order to prove the results in 
Section \ref{main results}, we only need to establish upper bounds for the growth rate of the diagonal elements of the 
reproducing kernels at origin and also at infinity. The following proposition proves an important growth rate
 for the diagonal elements in the reproducing kernels.
 
\begin{proposition}\label{zero bhv}
Suppose that the spectral density $ f(\lambda) $ of $ F$ satisfies (C1). Then, for fixed $ T>0 $ and
 $ C_1>0 $, there exists a positive constant $ C_2 $ such that 
\begin{equation}\label{zero behavior}
K_T(\omega, \omega) \leq C_2 \, {\vert \omega \vert}^2
\end{equation}
for all $\omega \in \mathbb R^d$ with $\vert \omega \vert < C_1$.
\end{proposition}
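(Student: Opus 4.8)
The plan is to read off \eqref{zero behavior} from the reproducing property \eqref{RKHS} combined with the pointwise bound \eqref{Shwartz}, which Lemma \ref{Pitt} has already extended to every element of $\mathcal{L}_{\Pi_T}(F)$. First I would recall that, by property (i) of the reproducing kernel, the section $K_T(\omega,\cdot)$ belongs to $\mathcal{L}_{\Pi_T}(F)$ for every $\omega \in \mathbb{R}^d$, and that taking $\phi = K_T(\omega,\cdot)$ in \eqref{RKHS} gives
\[
K_T(\omega,\omega) = {\langle K_T(\omega,\cdot), K_T(\omega,\cdot)\rangle}_F = {\Vert K_T(\omega,\cdot)\Vert}_F^2 ,
\]
so in particular $K_T(\omega,\omega) \ge 0$ and equals the squared $L^2(F)$-norm of the kernel section.

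Next, for the given $C_1 > 0$ let $C_2 > 0$ be the constant furnished by \eqref{Shwartz} (via Lemma \ref{Pitt}, which guarantees that \eqref{Shwartz} holds for all $\phi \in \mathcal{L}_{\Pi_T}(F)$, not just for functions of the form \eqref{elem}). Applying \eqref{Shwartz} to $\phi = K_T(\omega,\cdot)$ at the point $z = \omega$, and using that $|\omega| < C_1$, I obtain
\[
|K_T(\omega,\omega)| = |K_T(\omega,\cdot)(\omega)| \le C_2\, |\omega|\, {\Vert K_T(\omega,\cdot)\Vert}_F = C_2\, |\omega|\, \sqrt{K_T(\omega,\omega)} .
\]

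Finally, when $K_T(\omega,\omega) > 0$ I divide both sides by $\sqrt{K_T(\omega,\omega)}$ to get $\sqrt{K_T(\omega,\omega)} \le C_2\,|\omega|$, i.e. $K_T(\omega,\omega) \le C_2^2\,|\omega|^2$; the case $K_T(\omega,\omega) = 0$ is trivial. Replacing the constant $C_2$ by $C_2^2$ yields \eqref{zero behavior}. There is essentially no hard step: the only points requiring care are that the kernel section lies in the completed space $\mathcal{L}_{\Pi_T}(F)$ (immediate from the RKHS construction recorded after Lemma \ref{Pitt}) and that the Schwartz-type estimate \eqref{Shwartz} has genuinely been propagated from the dense subspace $\mathcal{L}^e_{\Pi_T}$ to all of $\mathcal{L}_{\Pi_T}(F)$, which is exactly the content of Lemma \ref{Pitt}. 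Equivalently, one could phrase the argument through the extremal characterization $K_T(\omega,\omega) = \sup\{|\phi(\omega)|^2 / {\Vert \phi \Vert}_F^2 : \phi \in \mathcal{L}_{\Pi_T}(F),\ \phi \ne 0\}$ and bound the numerator directly by \eqref{Shwartz}.
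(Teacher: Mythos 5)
Your argument is correct and is essentially identical to the paper's proof: the paper likewise applies the Schwartz-type bound \eqref{Shwartz} (extended to all of $\mathcal{L}_{\Pi_T}(F)$ by Lemma \ref{Pitt}) to the kernel section $K_T(\omega,\cdot)$, uses ${\Vert K_T(\omega,\cdot)\Vert}_F = (K_T(\omega,\omega))^{1/2}$, and sets the evaluation point equal to $\omega$. Your version is merely a bit more explicit about dividing by $\sqrt{K_T(\omega,\omega)}$ and handling the degenerate case, which the paper glosses over.
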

\begin{proof}
Since for any fixed $ \omega \in \mathbb{R}^d $, $ K_T(\omega, .) \in \mathcal{L}_{\Pi_T}(F) $, we can apply 
Lemma \ref{Pitt} to these functions. It follows from \eqref{Shwartz} that
\[
\vert K_T(\omega, \lambda)\vert \leq C_2 \, \vert \lambda \vert \, {\Vert K_T(\omega, .)\Vert}_F
=  C_2 \, \vert \lambda \vert {\left( K_T(\omega, \omega)\right)}^{1/2}
\]
for all $ \omega \in \mathbb{R}^d $ and $ \lambda \in \mathbb{C}^d $ with $ \vert \lambda \vert < C_1 $. 
By taking $ \lambda = \omega $,  we obtain the desired result.
\end{proof}

We also need to define another Hilbert space based on the tensor product of the elements in 
$\mathcal{L}_{\Pi_T}(F) $. For this purpose, first we define $ \mathcal{L}^e_{\Pi_T} \otimes \mathcal{L}^e_{\Pi_T} $ 
to be the span of functions $ (e_t \otimes e_s)(\omega, \lambda) := e_t(\omega) \overline{e_s(\lambda)} $ 
with $ t, s \in \Pi_T $. Now, denote by $ \mathcal{L}_{\Pi_T}(F) \otimes \mathcal{L}_{\Pi_T}(F) $ the closure in 
$ L^2(F \otimes F) $ of the space $ \mathcal{L}^e_{\Pi_T} \otimes \mathcal{L}^e_{\Pi_T} $. According to Theorem 
1 on p. 361 of \cite{Aronszajn_1950},  the new Hilbert space $ \mathcal{L}_{\Pi_T}(F) \otimes \mathcal{L}_{\Pi_T}(F)$ 
is also a RKHS with reproducing kernel
\begin{equation}
\left( (\omega_1, \lambda_1), (\omega_2, \lambda_2) \right) \mapsto K_T(\omega_1, \omega_2) 
\overline{K_T(\lambda_1, \lambda_2)}.
\end{equation}  
This implies that for $ \psi \in \mathcal{L}_{\Pi_T}(F) \otimes \mathcal{L}_{\Pi_T}(F) $,
\begin{equation}
{\langle \psi, K_T(\omega, .)\otimes K_T(\lambda, .) \rangle}_{F \otimes F} = \psi(\omega, \lambda).
\end{equation}

We finish this section by a lemma stating that the norm of the elements in spaces $ \mathcal{L}_{\Pi_T}(F) $ 
depends essentially on the tail behavior of the spectral measure $ F $. 

\begin{lemma}\label{equivalence}
Suppose $ f_0 $ and $ f_1 $ are two spectral densities satisfying the condition $ (C1) $, and further 
$ f_0(\lambda) \asymp f_1(\lambda) $ as $ |\lambda| \rightarrow \infty $. Then, $ \mathcal{L}_{\Pi_T}(f_0) 
= \mathcal{L}_{\Pi_T}(f_1) $, and further there exist positive constants $ C_3 $ and $ C_4 $ such that
\begin{equation*}
C_3 \, {\Vert\phi\Vert}_{f_1} \leq {\Vert\phi\Vert}_{f_0} \leq C_4 \, {\Vert\phi\Vert}_{f_1}
\end{equation*} 
for all $ \phi \in \mathcal{L}_{\Pi_T}(f_0)$.
\end{lemma}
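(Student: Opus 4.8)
The plan is to establish the two-sided norm comparison first on the dense subspace $\mathcal{L}^e_{\Pi_T}$ of functions of the form \eqref{elem}, and then transfer it to the closures, which will simultaneously yield $\mathcal{L}_{\Pi_T}(f_0) = \mathcal{L}_{\Pi_T}(f_1)$. Since $f_0 \asymp f_1$ as $|\lambda| \to \infty$ and $f_1$ is bounded away from $0$ near infinity by (C1), there are $R > 0$ (which we may take larger than the constant $k$ in (C1)) and constants $0 < c_1 \le c_2 < \infty$ with $c_1 f_1(\lambda) \le f_0(\lambda) \le c_2 f_1(\lambda)$ for all $|\lambda| > R$. For $\phi$ of the form \eqref{elem} I would split $\|\phi\|_{f_0}^2$ into a ``head'' integral over the ball $B_R = \{|\lambda| \le R\}$ and a ``tail'' integral over $B_R^c$. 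The tail is immediate: $\int_{B_R^c} |\phi|^2 f_0\,d\lambda \le c_2 \int_{B_R^c} |\phi|^2 f_1\,d\lambda \le c_2 \|\phi\|_{f_1}^2$.

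The real content is the head, where $f_0$ and $f_1$ need not be comparable (indeed $f_0$ may blow up at the origin even though it satisfies \eqref{Con}). Here I would invoke the local estimate \eqref{Shwartz} of Lemma \ref{xiao}, applied with the spectral density taken to be $f_1$ and with the free radius $C_1 := R$: there is a constant $C_2$, depending only on $T$, $R$ and $f_1$, such that $|\phi(\lambda)| \le C_2 |\lambda|\,\|\phi\|_{f_1}$ for every $\phi$ as in \eqref{elem} and every $\lambda$ with $|\lambda| \le R$. Hence
\[
\int_{B_R} |\phi(\lambda)|^2 f_0(\lambda)\,d\lambda \;\le\; C_2^2\, \|\phi\|_{f_1}^2 \int_{B_R} |\lambda|^2 f_0(\lambda)\,d\lambda ,
\]
and since $|\lambda|^2 \le (1+R^2)\,\frac{|\lambda|^2}{1+|\lambda|^2}$ on $B_R$, the last integral is at most $(1+R^2)\int_{\mathbb{R}^d}\frac{|\lambda|^2}{1+|\lambda|^2}f_0(\lambda)\,d\lambda$, which is finite by \eqref{Con}. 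Adding the head and tail estimates gives $\|\phi\|_{f_0} \le C_4 \|\phi\|_{f_1}$ for all $\phi \in \mathcal{L}^e_{\Pi_T}$ with $C_4$ independent of $\phi$; interchanging the roles of $f_0$ and $f_1$ gives $C_3 \|\phi\|_{f_1} \le \|\phi\|_{f_0}$ as well.

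It remains to pass to the closures. Because the two norms are equivalent on $\mathcal{L}^e_{\Pi_T}$, a sequence in $\mathcal{L}^e_{\Pi_T}$ is $\|\cdot\|_{f_0}$-Cauchy if and only if it is $\|\cdot\|_{f_1}$-Cauchy; by Lemma \ref{Pitt} such a sequence then converges locally uniformly to a single entire function $\widetilde{\phi}$, which serves as the canonical representative of its limit in both $\mathcal{L}_{\Pi_T}(f_0)$ and $\mathcal{L}_{\Pi_T}(f_1)$. Consequently the two spaces consist of exactly the same entire functions, i.e. $\mathcal{L}_{\Pi_T}(f_0) = \mathcal{L}_{\Pi_T}(f_1)$; and approximating an arbitrary $\phi$ in this common space by $\phi_n \in \mathcal{L}^e_{\Pi_T}$ and letting $n \to \infty$ in $C_3\|\phi_n\|_{f_1} \le \|\phi_n\|_{f_0} \le C_4\|\phi_n\|_{f_1}$ yields the stated inequalities for $\phi$. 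I expect the only mildly delicate step to be this last identification — checking that the abstract $L^2$-completions with respect to the two densities are realized as one and the same concrete space of entire functions — but Lemma \ref{Pitt} is tailor-made for it; the analytic crux is simply the use of \eqref{Shwartz} together with \eqref{Con} to neutralize any singularity of $f_0$ near the origin.
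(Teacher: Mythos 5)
Your proposal is correct and follows essentially the same route as the paper: the key inequality is the same head/tail decomposition, controlling the integral near the origin via the local bound \eqref{Shwartz} together with the integrability condition \eqref{Con}, and the tail via the comparability of $f_0$ and $f_1$ at infinity, exactly as in the paper's estimate \eqref{Eq:g}. The only (immaterial) difference is one of ordering — you prove the two-sided norm equivalence on $\mathcal{L}^e_{\Pi_T}$ first and then pass to the closures, whereas the paper applies the estimate directly to differences $g - g_n$ to obtain the inclusion of the completed spaces and then notes the norm bounds follow by the same computation.
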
  
\begin{proof}
Suppose $ g \in  \mathcal{L}_{\Pi_T}(f_0) $. There exists functions $ g_n $ of the form \eqref{elem} such that 
$ { \Vert g - g_n \Vert }_{f_0} \rightarrow 0 $ as $ n\rightarrow \infty $. Now, using Lemma \ref{Pitt}, we get
\begin{equation}\label{Eq:g}
\begin{split}
{ \Vert g - g_n \Vert }^2_{f_1}
&= \int_{\mathbb{R}^d}\!{|g(\lambda) - g_n(\lambda)|}^2f_1(\lambda)\,d\lambda\\
& \leq  C_2 \, { \Vert g - g_n \Vert }^2_{f_0} \int_{|\lambda|<C_1}\!|\lambda|^2 f_1(\lambda)\,d\lambda\\
& + \, C \, { \Vert g - g_n \Vert }^2_{f_0}
\end{split}
\end{equation}
which implies $ g \in \mathcal{L}_{\Pi_T}(f_1) $. Thus $ \mathcal{L}_{\Pi_T}(f_0) \subseteq \mathcal{L}_{\Pi_T}(f_1) $. 
Similarly we have $ \mathcal{L}_{\Pi_T}(f_1) \subseteq \mathcal{L}_{\Pi_T}(f_0)$. Finally one can see that \eqref{Eq:g}
holds if we replace  $ g - g_n$ by $  \phi \in \mathcal{L}_{\Pi_T}(f_0)$ This leads to the second part of the lemma.
\end{proof}

\section{Main Results}\label{main results}

In this section, we study the equivalence of GRFs with stationary increments, and clarify its connection to the 
Hilbert spaces constructed in Section \ref{pr}. In particular, the role of the reproducing kernels of the RKHS 
$ \mathcal{L}_{\Pi_T}(F) $ will be emphasized. We start this section by an extension of Theorem 5 on p. 84 in 
\cite{Ibragimov_Rozanov_1978} (Theorem 1 on p. 149 in  \cite{Yadrenko_1983}) for stationary Gaussian 
processes (fields) to Gaussian random fields with stationary increments.  Some extensions of the criteria 
for equivalence of stationary Gaussian processes (fields) have also been obtained by \cite{VanZanten_2007,
VanZanten_2008} and \cite{Xue_2011}. 
The following theorem is an extension of Theorems 3.3.9 and 3.3.10 in \cite{Xue_2011} and also Theorem 4.3 
in \cite{VanZanten_2007}.

\begin{thm}\label{one} 
Two centered GRFs with stationary increments and spectral measures $ F_0 $ and $ F_1 $ are equivalent on 
$D$ if and only if:
\begin{itemize}
\item[(i)]\, $ {\Vert \phi \Vert}_{F_0} \asymp {\Vert \phi \Vert}_{F_1}, \forall \phi \in \mathcal{L}^e_D $, and
\item[(ii)]\, There exists a function $ \psi \in \mathcal{L}_D (F_0) \otimes \mathcal{L}_D (F_0) $ such that for all $ t, s \in D $
\begin{equation}\label{ess}
	{\langle e_t, e_s \rangle}_{F_0} - {\langle e_t, e_s \rangle}_{F_1} = {\langle \psi, e_t \otimes e_s \rangle}_{F_0 \otimes F_0}.		
\end{equation}  
\end{itemize}
\end{thm}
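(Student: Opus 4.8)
The plan is to reduce the statement to the operator form of the classical dichotomy for equivalence of centered Gaussian measures — the one behind Theorem~5 on p.~84 of \cite{Ibragimov_Rozanov_1978} and its extensions in \cite{VanZanten_2007} and \cite{Xue_2011} — and then to translate its two ingredients into the spectral conditions (i) and (ii) by means of the spectral isometry and the tensor-product Hilbert space of Section~\ref{pr}. First I would fix the dictionary. Let $\mu_0,\mu_1$ be the Gaussian measures on $(\mathbb R^D,\mathcal B(\mathbb R^D))$ induced by the two fields and let $H_j\subseteq L^2(\mu_j)$ be the closed linear span of $\{X_t:t\in D\}$ under $\mu_j$. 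Since \eqref{Con} ensures $e_t\in L^2(F_j)$ and the covariance satisfies $C_j(t,s)=\langle e_t,e_s\rangle_{F_j}$, the correspondence $X_t\mapsto e_t$ extends to a unitary isomorphism $H_j\cong\mathcal L_D(F_j)$ carrying the covariance form of $\mu_j$ to the inner product $\langle\cdot,\cdot\rangle_{F_j}$; working with these Gaussian Hilbert spaces avoids having to topologize $\mathbb R^D$.

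Next I would state the abstract criterion in a normalized form: $\mu_0\sim\mu_1$ if and only if there is a bounded, self-adjoint, nonnegative, boundedly invertible operator $A$ on $\mathcal L_D(F_0)$ with $\langle A\phi,\eta\rangle_{F_0}=\langle\phi,\eta\rangle_{F_1}$ for all $\phi,\eta\in\mathcal L^e_D$ and with $A-\mathrm{Id}$ Hilbert--Schmidt on $\mathcal L_D(F_0)$. The existence of such an $A$, setting aside the Hilbert--Schmidt requirement, is precisely condition (i): a norm equivalence $\|\phi\|_{F_0}\asymp\|\phi\|_{F_1}$ on $\mathcal L^e_D$ makes $(\phi,\eta)\mapsto\langle\phi,\eta\rangle_{F_1}$ a bounded, bounded-below Hermitian form on $\mathcal L_D(F_0)$, represented by the Riesz theorem as $\langle A\phi,\eta\rangle_{F_0}$ with $A$ bounded, self-adjoint and boundedly invertible, and conversely. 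In particular (i) forces $\mathcal L_D(F_0)$ and $\mathcal L_D(F_1)$ to coincide as vector spaces with equivalent norms, so everything below is unambiguous.

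It then remains to show, granting (i), that ``$B:=A-\mathrm{Id}$ is Hilbert--Schmidt on $\mathcal L_D(F_0)$'' is equivalent to (ii); this is the step I expect to need the most care. Here I would use the canonical isometry between the Hilbert space $\mathcal L_D(F_0)\otimes\mathcal L_D(F_0)$ — which, by its construction in Section~\ref{pr}, is the $L^2(F_0\otimes F_0)$-closure of the span of the functions $(e_t\otimes e_s)(\omega,\lambda)=e_t(\omega)\overline{e_s(\lambda)}$, i.e.\ the closed span of $\phi\otimes\bar\eta$ with $\phi,\eta\in\mathcal L_D(F_0)$ — and the Hilbert--Schmidt operators on $\mathcal L_D(F_0)$, under which $\phi\otimes\bar\eta$ corresponds to the rank-one operator $\zeta\mapsto\langle\zeta,\eta\rangle_{F_0}\,\phi$ and the tensor inner product matches the Hilbert--Schmidt inner product. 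A given $\psi\in\mathcal L_D(F_0)\otimes\mathcal L_D(F_0)$ then corresponds to a Hilbert--Schmidt $\widehat\psi$ with $\langle\widehat\psi\,e_t,e_s\rangle_{F_0}=\langle\psi,e_t\otimes e_s\rangle_{F_0\otimes F_0}$ for all $t,s\in D$; and since $\{e_t\otimes e_s:t,s\in D\}$ spans a dense subspace of $\mathcal L_D(F_0)\otimes\mathcal L_D(F_0)$, a $\psi$ satisfying \eqref{ess} exists if and only if the functional $e_t\otimes e_s\mapsto\langle e_t,e_s\rangle_{F_0}-\langle e_t,e_s\rangle_{F_1}=\langle Be_t,e_s\rangle_{F_0}$ extends boundedly to $\mathcal L_D(F_0)\otimes\mathcal L_D(F_0)$, which by the isometry means exactly that $B$ is Hilbert--Schmidt, with $\widehat\psi=B$. (In the necessity direction $\psi$ may be taken Hermitian, $\psi(\omega,\lambda)=\overline{\psi(\lambda,\omega)}$, mirroring $B=B^{\ast}$, although the statement only asserts existence.)

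The two implications then follow: if $\mu_0\sim\mu_1$, the abstract criterion supplies $A$, hence (i), and the isometry turns ``$A-\mathrm{Id}$ Hilbert--Schmidt'' into (ii); conversely (i) produces $A$, (ii) forces $A-\mathrm{Id}$ Hilbert--Schmidt, and the abstract criterion gives $\mu_0\sim\mu_1$. The principal obstacle is not an estimate but the bookkeeping of the third step — checking that the Hilbert--Schmidt operator of the abstract criterion acts on the small space $\mathcal L_D(F_0)$ rather than on $L^2(F_0)$, that $\{e_t\otimes e_s\}$ is total in $\mathcal L_D(F_0)\otimes\mathcal L_D(F_0)$, and that the conjugation conventions in \eqref{ess} match the rank-one correspondence — all of which rest on the spectral isomorphism $H_j\cong\mathcal L_D(F_j)$ and on $\mathcal L_D(F_0)=\mathcal L_D(F_1)$ from (i). Note that, in contrast to the subsequent results of this section, no growth hypothesis such as (C1) enters here.
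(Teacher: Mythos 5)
Your argument is correct and follows essentially the same route as the paper: both reduce the theorem to the operator form of the classical equivalence criterion (Theorem 4, p.~80 of Ibragimov and Rozanov, adapted to random fields), identify condition (i) with the boundedness and bounded invertibility of the operator representing the $F_1$-form on $\mathcal{L}_D(F_0)$, and then convert the Hilbert--Schmidt condition on $I-A^{*}A$ into condition (ii) via the isometry between $\mathcal{L}_D(F_0)\otimes\mathcal{L}_D(F_0)$ and the Hilbert--Schmidt operators on $\mathcal{L}_D(F_0)$. The only difference is presentational: the paper performs this last identification concretely, writing $\psi$ as a double series over an orthonormal basis of eigenvectors and invoking Bessel's inequality, whereas you phrase the same correspondence coordinate-free through the rank-one/trace duality.
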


\begin{proof}
The proof is essentially a reconstruction of the proof of the Theorem 5, p. 84 of \cite{Ibragimov_Rozanov_1978}, and is 
given here for the sake of completeness. The 
starting point is however, Theorem 4 on p. 80 of the same reference. The proof there can be adapted to our context 
with little change since it only involves the ``entropy distance" between the Gaussian measures, and thus is true for 
general GRFs (See also \cite{Chatterji_1978}, Theorem 4.1, 4.4  pp. 180-185 ). After doing so, we get that two GRFs with stationary 
increments and spectral measures $ F_0 $ and $ F_1 $ are equivalent on $ D \subseteq \mathbb{R}^d $, if and only if
$$ {\Vert \phi \Vert}_{F_0} \asymp {\Vert \phi \Vert}_{F_1}, \ \ \ \forall \, \phi \in \mathcal{L}^e_D, $$
and $ \Delta = I - A^*A $ is a Hilbert-Schmidt operator in $ \mathcal{L}_D(F_0) $, in which $ I $ is the identity operator 
on $ \mathcal{L}_D(F_0) $, and $ A: \mathcal{L}_D(F_0) \mapsto \mathcal{L}_D(F_1) $ with $ A\phi = \phi $ for all 
$ \phi \in \mathcal{L}_D(F_0) $. Now, since $ \Delta $ is a self-adjoint operator, if it is also a Hilbert-Schmidt operator, 
by the Spectral Theorem (See \cite{Dunford_1963}, Corollary 5 p. 905 ), we can conclude that there exists an orthonormal 
basis for $ \mathcal{L}_D(F_0) $ consisting of the eigenvectors of $ \Delta $, denoting them by $ \phi_j , j = 1, 2, . . . $, 
with corresponding eigenvalues $ \lambda_j, j = 1, 2, . . . $ with $ \sum_{j}\lambda^2_j < \infty  $. Note that we can write 
$ \sum_{j}\lambda^2_j = \sum_{j,k} { \langle \Delta\phi_j, \phi_k \rangle}^2_{F_0} $. The square root of this quantity is 
called the Hilbert-Schmidt norm. This norm doesn't depend on the choice of the orthonormal basis (See \cite{Dunford_1963}, 
Lemma 2, p. 1010). Therefore, we can rephrase Theorem 4 in the following form: two GRFs with stationary increments 
and spectral measures $ F_0 $ and $ F_1 $ are equivalent on $ D $, if and only if, $ {\Vert \phi \Vert}_{F_0} 
\asymp {\Vert \phi \Vert}_{F_1}, \forall \phi \in \mathcal{L}^e_D $, and $  \sum_{j,k} { \langle \Delta\phi_j, 
\phi_k \rangle}^2_{F_0} < \infty $ for any orthonormal basis for $ \mathcal{L}_D(F_0) $.

Now, take an arbitrary orthonormal basis for $ \mathcal{L}_D(F_0) $, $ \phi_1, \phi_2, . . . $, and suppose 
$ \sum_{j,k} { \langle \Delta\phi_j, \phi_k \rangle}^2_{F_0} < \infty $. Define $ \psi_0(\omega, \lambda) =   
\sum_{j,k} { \langle \Delta\phi_j, \phi_k \rangle}_{F_0}\phi_j(\omega)\overline{\phi_k(\lambda)} $. We can see that 
$ {\Vert \psi_0 \Vert}^2_{F_0 \otimes F_0} = \sum_{j,k} { \langle \Delta\phi_j, \phi_k \rangle}^2_{F_0} < \infty $, and 
thus by the form of $ \psi_0 $, it's clear that it belongs to $ \mathcal{L}_D(F_0) \otimes \mathcal{L}_D(F_0) $. Also, 
observe that
\begin{equation*}
\begin{split}
{\langle \psi_0, \phi_j \otimes \phi_k  \rangle}_{F_0 \otimes F_0}
&= {\langle \Delta\phi_j, \phi_k \rangle}_{F_0}\\
&= {\langle (I - A^*A)\phi_j, \phi_k \rangle}_{F_0}\\
&= {\langle \phi_j, \phi_k \rangle}_{F_0} - {\langle \phi_j, \phi_k \rangle}_{F_1}.
\end{split}
\end{equation*}
This shows that \eqref{ess} holds for orthonormal basis of the space $ \mathcal{L}_D(F_0) $. Therefore, by continuity of inner 
product \eqref{ess} will be true for all the elements in $ \mathcal{L}_D(F_0) $, especially for $ e_t $ and $ e_s $ when $ t, s \in D $.  

Conversely, suppose there exists a function $ \psi_0 \in \mathcal{L}_D(F_0) \otimes \mathcal{L}_D(F_0) $, such that 
$ {\langle \phi_j, \phi_k \rangle}_{F_0} - {\langle \phi_j, \phi_k \rangle}_{F_1} 
= {\langle \psi_0, \phi_j \otimes \phi_k  \rangle}_{F_0 \otimes F_0} $ for an orthonormal basis $ \phi_j $'s for 
$ \mathcal{L}_D(F_0) $. Then, we have 
\begin{equation*}
\begin{split}
\sum_{j,k} {\langle \Delta \phi_j, \phi_k \rangle}^2_{F_0}
&= \sum_{j, k}{\left( {\langle \phi_j, \phi_k \rangle}_{F_0} - {\langle \phi_j, \phi_k \rangle}_{F_1}\right)}^2\\ 
&= \sum_{j, k} {\langle \psi_0, \phi_j \otimes \phi_k  \rangle}^2_{F_0 \otimes F_0}\\
& \leq  {\Vert \psi_0 \Vert }^2_{F_0 \otimes F_0} <  \infty.
\end{split}
\end{equation*} 
This completes the proof.        
\end{proof}

Theorem \ref{one} is stated in a general form for  GRFs  with stationary increment, with no restriction on their spectral measures. 
However, verifying the second condition in this theorem, which involves finding a function $ \psi \in \mathcal{L}_D(F_0) \otimes 
\mathcal{L}_D(F_0)$ with the property \eqref{ess}, seems to be hard. If we put the condition $ \mbox{(C1)} $ on one of  the 
spectral measures (say, $F_0$),  we get the following theorem using the reproducing kernels of 
$\mathcal{L}_{\Pi_T} (F_0) $. In fact, this theorem clarifies what must be the function $ \psi $ in Theorem \ref{one}.

\begin{thm}\label{two} 
Two centered GRFs with stationary increments, and spectral measures $ F_0 $ and $ F_1 $, with $ F_0 $ satisfying 
assumption $ (C1) $, are equivalent on $ \Pi_T $ for some $ T>0 $ if and only if:

(i) $ {\Vert \phi \Vert}_{F_0} \asymp {\Vert \phi \Vert}_{F_1}, \forall \phi \in \mathcal{L}^e_{\Pi_T} $,

(ii) $ \psi(\omega, \lambda) = K_T^0(\omega, \lambda) - \int_{\mathbb{R}^d}\!K_T^0(\omega, \gamma)
\overline{K_T^0(\lambda, \gamma)}\,F_1(d\gamma) \in \mathcal{L}_{\Pi_T} (F_0) \otimes \mathcal{L}_{\Pi_T} (F_0) $,
where $ K^0_T(. , .) $ are the reproducing kernels of the space $ \mathcal{L}_{\Pi_T}(F_0) $. 
\end{thm}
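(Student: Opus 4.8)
The plan is to derive Theorem~\ref{two} from Theorem~\ref{one} by making explicit, through the reproducing kernel $K_T^0$ of $\mathcal{L}_{\Pi_T}(F_0)$, the function whose existence is asserted in Theorem~\ref{one}(ii); morally $\psi$ is nothing but the kernel, in the RKHS sense, of the self-adjoint operator $\Delta=I-A^*A$ appearing in the proof of Theorem~\ref{one}. Since condition (i) figures in both statements, everything reduces to showing, granted (i), that the existence of some $\psi_0\in\mathcal{L}_{\Pi_T}(F_0)\otimes\mathcal{L}_{\Pi_T}(F_0)$ satisfying \eqref{ess} is equivalent to the membership asserted in (ii). Two preliminary facts, both using (C1), will be invoked repeatedly. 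First, by Lemma~\ref{Pitt} each element of $\mathcal{L}_{\Pi_T}(F_0)$ has a canonical entire representative and $L^2(F_0)$-limits of functions of the form \eqref{elem} converge locally uniformly, hence pointwise; together with (i) this lets the relation $\|\phi\|_{F_0}\asymp\|\phi\|_{F_1}$ and the bilinear form $\langle\cdot,\cdot\rangle_{F_1}$ be extended from $\mathcal{L}^e_{\Pi_T}$ to all of $\mathcal{L}_{\Pi_T}(F_0)$, and in particular yields $\|K_T^0(\omega,\cdot)\|_{F_1}^2\le C\,K_T^0(\omega,\omega)<\infty$, so that the integral in (ii) converges absolutely for every $\omega,\lambda$ and $\psi$ is a well-defined pointwise function. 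Second, I will use the reproducing identities $\langle\phi,K_T^0(\lambda,\cdot)\rangle_{F_0}=\phi(\lambda)$ for $\phi\in\mathcal{L}_{\Pi_T}(F_0)$ and $\langle g,K_T^0(\omega,\cdot)\otimes K_T^0(\lambda,\cdot)\rangle_{F_0\otimes F_0}=g(\omega,\lambda)$ for $g\in\mathcal{L}_{\Pi_T}(F_0)\otimes\mathcal{L}_{\Pi_T}(F_0)$, together with the density of the span of $\{K_T^0(\omega,\cdot):\omega\in\mathbb{R}^d\}$ in $\mathcal{L}_{\Pi_T}(F_0)$ (noted in Section~\ref{pr}) and of the span of $\{e_t\otimes e_s\}$ in the tensor space (immediate from its definition).

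For the forward implication, suppose $X$ and $Y$ are equivalent on $\Pi_T$. Theorem~\ref{one} gives (i) and some $\psi_0\in\mathcal{L}_{\Pi_T}(F_0)\otimes\mathcal{L}_{\Pi_T}(F_0)$ with $\langle e_t,e_s\rangle_{F_0}-\langle e_t,e_s\rangle_{F_1}=\langle\psi_0,e_t\otimes e_s\rangle_{F_0\otimes F_0}$ for all $t,s\in\Pi_T$. Both sides are linear in $e_t$ and conjugate-linear in $e_s$ — the conjugation being built into the symbol $(e_t\otimes e_s)(\omega,\lambda)=e_t(\omega)\overline{e_s(\lambda)}$ — so the identity extends by sesquilinearity to $\langle\phi,\chi\rangle_{F_0}-\langle\phi,\chi\rangle_{F_1}=\langle\psi_0,\phi\otimes\chi\rangle_{F_0\otimes F_0}$ for $\phi,\chi\in\mathcal{L}^e_{\Pi_T}$, and then, by the continuity afforded by the preliminary facts (the map $(\phi,\chi)\mapsto\phi\otimes\chi$ is continuous from $L^2(F_0)^2$ into $L^2(F_0\otimes F_0)$, and $\langle\cdot,\cdot\rangle_{F_1}$ is continuous on $\mathcal{L}_{\Pi_T}(F_0)$), to all $\phi,\chi\in\mathcal{L}_{\Pi_T}(F_0)$. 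Taking $\phi=K_T^0(\omega,\cdot)$, $\chi=K_T^0(\lambda,\cdot)$ and applying the two reproducing identities on either side yields
\begin{equation*}
\begin{split}
\psi_0(\omega,\lambda)&=\langle\psi_0,K_T^0(\omega,\cdot)\otimes K_T^0(\lambda,\cdot)\rangle_{F_0\otimes F_0}\\
&=K_T^0(\omega,\lambda)-\int_{\mathbb{R}^d}K_T^0(\omega,\gamma)\overline{K_T^0(\lambda,\gamma)}\,F_1(d\gamma)=\psi(\omega,\lambda)
\end{split}
\end{equation*}
for every $\omega,\lambda\in\mathbb{R}^d$, so $\psi=\psi_0\in\mathcal{L}_{\Pi_T}(F_0)\otimes\mathcal{L}_{\Pi_T}(F_0)$, which is exactly (ii).

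For the converse, assume (i) and (ii). Because $\psi\in\mathcal{L}_{\Pi_T}(F_0)\otimes\mathcal{L}_{\Pi_T}(F_0)$, the tensor reproducing identity and the definition of $\psi$ give $\langle\psi,K_T^0(\omega,\cdot)\otimes K_T^0(\lambda,\cdot)\rangle_{F_0\otimes F_0}=\psi(\omega,\lambda)=\langle K_T^0(\omega,\cdot),K_T^0(\lambda,\cdot)\rangle_{F_0}-\langle K_T^0(\omega,\cdot),K_T^0(\lambda,\cdot)\rangle_{F_1}$; by density of the span of $\{K_T^0(\omega,\cdot)\}$ in $\mathcal{L}_{\Pi_T}(F_0)$ and the same continuity as above this extends to $\langle\psi,\phi\otimes\chi\rangle_{F_0\otimes F_0}=\langle\phi,\chi\rangle_{F_0}-\langle\phi,\chi\rangle_{F_1}$ for all $\phi,\chi\in\mathcal{L}_{\Pi_T}(F_0)$, in particular for $\phi=e_t$ and $\chi=e_s$ with $t,s\in\Pi_T$. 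Hence $\psi$ is a function as required in Theorem~\ref{one}(ii), and Theorem~\ref{one} gives that $X$ and $Y$ are equivalent on $\Pi_T$.

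The reproducing-kernel manipulations above are routine once the requisite limiting arguments are in place; the step I expect to be the main obstacle is precisely justifying those arguments, namely upgrading condition (i) from the incomplete space $\mathcal{L}^e_{\Pi_T}$ to its completion $\mathcal{L}_{\Pi_T}(F_0)$ and upgrading $L^2(F_0)$-convergence to $L^2(F_1)$-convergence on that space — this is exactly where (C1) is indispensable, since it is Lemma~\ref{Pitt} (entire representatives, locally uniform limits) that permits identifying limits unambiguously in both $L^2(F_0)$ and $L^2(F_1)$. A secondary point needing care is bookkeeping the conjugation implicit in the symbol $\otimes$, so that the two reproducing identities pair correctly with the right-hand side of \eqref{ess}, and checking the absolute convergence of $\int_{\mathbb{R}^d}K_T^0(\omega,\gamma)\overline{K_T^0(\lambda,\gamma)}\,F_1(d\gamma)$ that makes $\psi$ pointwise well defined.
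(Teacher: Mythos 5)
Your proposal is correct and follows essentially the same route as the paper: both directions are obtained from Theorem \ref{one} by extending the identity \eqref{ess} sesquilinearly and by continuity to all of $\mathcal{L}_{\Pi_T}(F_0)$ (using condition (i) to identify $\mathcal{L}_{\Pi_T}(F_0)$ with $\mathcal{L}_{\Pi_T}(F_1)$), then substituting the reproducing kernels $K_T^0(\omega,\cdot)$, $K_T^0(\lambda,\cdot)$ in the forward direction and invoking the density of $\mathrm{span}\{K_T^0(\omega,\cdot)\}$ in the converse. Your preliminary remarks on justifying the limiting arguments and the pointwise well-definedness of the integral simply make explicit what the paper compresses into ``bilinearity and continuity of the inner product.''
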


\begin{proof}
First, assume that the measures induced by them are equivalent. Then, by Theorem \ref{one}, there exists a function 
$ \psi \in \mathcal{L}_{\Pi_T} (F_0) \otimes \mathcal{L}_{\Pi_T} (F_0) $ such that  \eqref{ess} holds. 
Now, because of bilinearity and continuity of inner product together with the fact that $ \mathcal{L}_{\Pi_T}(F_0) =
 \mathcal{L}_{\Pi_T}(F_1) $, we get
\begin{equation}\label{imp}
{\langle \phi_1, \phi_2 \rangle}_{F_0} - {\langle \phi_1, \phi_2 \rangle}_{F_1} = {\langle \psi, \phi_1 \otimes \phi_2 \rangle}_{F_0 \otimes F_0}
\end{equation}
for all $ \phi_1, \phi_2 \in \mathcal{L}_{\Pi_T}(F_0) = \mathcal{L}_{\Pi_T}(F_1) $. Now, simply choose for fixed $ \omega, \lambda \in 
\mathbb{R}^{d} $, $ \phi_1(\gamma) = K_T^0(\omega,\gamma) $ and $ \phi_2(\gamma) = K_T^0(\lambda,\gamma) $, 
and replace them in \eqref{imp} to get
\begin{equation*}
\psi(\omega, \lambda) = K_T^0(\omega, \lambda) - \int_{\mathbb{R}^d}\!K_T^0(\omega, \gamma)
\overline{K_T^0(\lambda, \gamma)}\,F_1(d\gamma).
\end{equation*}
Conversely, since $ \psi \in \mathcal{L}_{\Pi_T} (F_0) \otimes \mathcal{L}_{\Pi_T} (F_0) $, by the reproducing kernel property 
we get $ \psi(\omega,\lambda) = {\langle \psi, K_T^0(\omega, . ) \otimes K_T^0(\lambda, . ) \rangle}_{F_0 \otimes F_0} $. Also, 
note that by the form of $ \psi $, we have $ \psi(\omega,\lambda) = {\langle K_T^0(\omega, . ), K_T^0(\lambda, . ) \rangle}_{F_0}
 - {\langle K_T^0(\omega, . ), K_T^0(\lambda, . ) \rangle}_{F_1} $. Combining them together, we get
\begin{equation}\label{gen}
{\langle K_T^0(\omega, . ), K_T^0(\lambda, . ) \rangle}_{F_0} - {\langle K_T^0(\omega, . ), K_T^0(\lambda, . ) \rangle}_{F_1} 
= {\langle \psi, K_T^0(\omega, . ) \otimes K_T^0(\lambda, . ) \rangle}_{F_0 \otimes F_0}.
\end{equation}
Now, since the $ span \lbrace K_T^0(\omega, . ); \omega \in \mathbb{R}^{d} \rbrace $ is dense in $ \mathcal{L}_{\Pi_T}(F_0) 
( = \mathcal{L}_{\Pi_T}(F_1) ) $, Equality \eqref{gen} holds true for all the elements in $ \mathcal{L}_{\Pi_T}(F_0) $. 
\end{proof}

Checking the first assumption in Theorem \ref{two} may not be easy in general since we need to compare the 
norms of all the elements in the space $ \mathcal{L}^e_{\Pi_T} $ under two different measures. For that purpose, 
in the following, we will find equivalent conditions which may be easier to verify in application.   

It is well known (See \cite{Dunford_1963}, p. 1009 ) that for $ \psi \in L^2(F \otimes F) $, a Hilbert-Schmidt operator on $ L^2(F) $ 
can be defined as follows
\begin{equation}\label{opr}
\left(\psi\phi\right)(\omega) = \int_{\mathbb{R}^d}\!\psi(\omega,\lambda)\phi(\lambda)\,F(d\lambda)
\end{equation}
for every $ \phi \in L^2(F) $. If we use specifically the $ \psi $ in Theorem \ref{two}, and restrict the domain to 
$\mathcal{L}_{\Pi_T} (F) $, we will have again a Hilbert-Schmidt operator on $ \mathcal{L}_{\Pi_T}(F) $. Note 
that the image of the operator $ \psi $ is in fact inside the $ \mathcal{L}_{\Pi_T}(F) $. To prove this, observe that for 
$ \phi \in \mathcal{L}_{\Pi_T}(F) $,
\[
\begin{split}
\big( \pi_{\mathcal{L}_{\Pi_T}} (\psi\phi)\big)(\omega) 
&= \int_{\mathbb{R}^d}\!(\psi\phi)(x)\overline{K_T(\omega, x)}\,F(dx)\\
&= \int_{\mathbb{R}^d}\!\int_{\mathbb{R}^d}\!\phi(y)\psi(x, y)\overline{K_T(\omega, x)}\,F(dx)F(dy)\\
&= \int_{\mathbb{R}^d}\!\int_{\mathbb{R}^d}\!\int_{\mathbb{R}^d}\!\phi(y)K_T(x, \gamma)\overline{K_T(y, \gamma)}\overline{K_T(\omega, x)}\,F(dx)F(dy)\\
&  \hspace{15em} \times (F(d\gamma) - F_1(d\gamma))\\
&= \int_{\mathbb{R}^d}\!\int_{\mathbb{R}^d}\!\phi(y)K_T(\omega, \gamma)\overline{K_T(y, \gamma)}\,(F(d\gamma) - F_1(d\gamma))F(dy)\\
&= \int_{\mathbb{R}^d}\!\phi(y)\psi(\omega, y)\,F(dy) = (\psi\phi)(\omega).
\end{split}
\]
This argument shows that $ \psi\phi \in \mathcal{L}_{\Pi_T}(F) $ for any $ \phi \in \mathcal{L}_{\Pi_T}(F) $. Also, observe that since 
$ \psi(\omega, \lambda) = \overline{\psi(\lambda, \omega)} $, the operator $ \psi $ is self-adjoint. This fact together with compactness 
of this operator (Since $ \psi $ is a Hilbert-Schmidt operator, it is already compact, see \cite{Dunford_1963}, p. 1009) enable us to use 
the Spectral Theorem for compact normal operators (See \cite{Dunford_1963}, Corollary 5, p. 905), which we will use in the proof of 
the next theorem. In fact, the next theorem is an extension of  Theorem 4.3  in \cite{VanZanten_2007}  and shows that the first condition 
in Theorem \ref{two} can be replaced by $ 1 \notin \sigma (\psi)$, 
where $ \sigma (\psi) $ is the spectrum of the operator $ \psi $. Recall that $ \sigma (\psi) $ is the set of all 
$ \lambda \in \mathbb{C} $ such that $ \lambda I - \psi $ is not an invertible operator where $ I $ is the identity operator (cf. \cite{Dunford_1963}, p. 902). 
 
 The following is an extension of Theorem 4.3  in \cite{VanZanten_2007} to the setting of random fields.   
\begin{thm}\label{HS}
Two GRFs with stationary increments and spectral measures $ F_0 $ and $ F_1 $ with $ F_0 $ satisfying the condition $ (C1) $, 
are equivalent on ${\Pi_T}$ if and only if the function defined by
\begin{equation}
\psi(\omega, \lambda) = K_T^0(\omega, \lambda) - \int_{\mathbb{R}^d}\!K_T^0(\omega, \gamma)\overline{K_T^0(\lambda, \gamma)}\,F_1(d\gamma)
\end{equation}
belongs to $ \mathcal{L}_{\Pi_T} (F_0) \otimes \mathcal{L}_{\Pi_T} (F_0) $, and $ 1 \notin \sigma (\psi)$.
\end{thm}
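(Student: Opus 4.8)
The plan is to combine Theorem~\ref{two} with elementary spectral theory for the self-adjoint compact operator induced by $\psi$. By Theorem~\ref{two}, the two fields are equivalent on $\Pi_T$ if and only if (i) $\|\phi\|_{F_0}\asymp\|\phi\|_{F_1}$ for all $\phi\in\mathcal{L}^e_{\Pi_T}$ and (ii) $\psi\in\mathcal{L}_{\Pi_T}(F_0)\otimes\mathcal{L}_{\Pi_T}(F_0)$. Since (ii) is already one of the hypotheses of Theorem~\ref{HS}, it suffices to show that, \emph{assuming} (ii), condition (i) holds if and only if $1\notin\sigma(\psi)$; if (ii) fails, both sides of the asserted equivalence are false and there is nothing to prove. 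Write $H_0:=\mathcal{L}_{\Pi_T}(F_0)$. By the discussion preceding the theorem, (ii) ensures that $\psi$, viewed via \eqref{opr} as an operator on $H_0$, is Hilbert--Schmidt, hence compact, and self-adjoint, so $\sigma(\psi)$ is a compact subset of $\mathbb{R}$.

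The crux is the identity
\[
\bigl\langle (I-\psi)\phi,\phi\bigr\rangle_{F_0} \;=\; \|\phi\|_{F_1}^2 \qquad \text{for every } \phi\in\mathcal{L}^e_{\Pi_T}.
\]
To establish it I would compute $\langle\psi e_s,e_t\rangle_{F_0}$ for $s,t\in\Pi_T$ by substituting the explicit kernel $\psi(\omega,\lambda)=K_T^0(\omega,\lambda)-\int K_T^0(\omega,\gamma)\overline{K_T^0(\lambda,\gamma)}\,F_1(d\gamma)$, interchanging the order of integration (legitimate under (C1) and $\psi\in L^2(F_0\otimes F_0)$), and then collapsing the inner integrals with the reproducing identity $\int e_r(\gamma)\overline{K_T^0(\omega,\gamma)}\,F_0(d\gamma)=e_r(\omega)$, the Hermitian symmetry $K_T^0(\omega,\lambda)=\overline{K_T^0(\lambda,\omega)}$, and the fact that $\Pi_T$ is symmetric (so $\overline{e_r}=e_{-r}\in\mathcal{L}^e_{\Pi_T}$). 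This produces $\langle\psi e_s,e_t\rangle_{F_0}=\langle e_s,e_t\rangle_{F_0}-\langle e_s,e_t\rangle_{F_1}$, and the displayed identity follows by $\mathbb{R}$-bilinearity over the spanning functions $e_t$. Because $\mathcal{L}^e_{\Pi_T}$ is dense in $H_0$ and $I-\psi$ is bounded, the identity forces $\langle(I-\psi)\phi,\phi\rangle_{F_0}\ge 0$ for all $\phi\in H_0$, i.e. $\psi\le I$, so $\sigma(\psi)\subseteq(-\infty,1]$.

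The equivalence is now immediate. If $1\notin\sigma(\psi)$, then since $\sigma(\psi)$ is compact and contained in $(-\infty,1]$ we have $\sup\sigma(\psi)\le 1-\delta$ for some $\delta>0$, hence $\delta I\le I-\psi\le(1+\|\psi\|)I$; evaluating at $\phi\in\mathcal{L}^e_{\Pi_T}$ and using the key identity gives $\delta\|\phi\|_{F_0}^2\le\|\phi\|_{F_1}^2\le(1+\|\psi\|)\|\phi\|_{F_0}^2$, which is (i). Conversely, if (i) holds there are constants $0<c\le C$ with $c^2\|\phi\|_{F_0}^2\le\|\phi\|_{F_1}^2\le C^2\|\phi\|_{F_0}^2$ on $\mathcal{L}^e_{\Pi_T}$, hence, by density and the key identity, $c^2I\le I-\psi\le C^2I$ on $H_0$; then $0\notin\sigma(I-\psi)$, so $1\notin\sigma(\psi)$. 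Invoking Theorem~\ref{two} finishes the proof.

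I expect the main obstacle to be the reproducing-kernel computation yielding $\langle\psi e_s,e_t\rangle_{F_0}=\langle e_s,e_t\rangle_{F_0}-\langle e_s,e_t\rangle_{F_1}$: one has to track the complex conjugations in the definition of the tensor product carefully, use the Hermitian symmetry of $K_T^0$, and---most delicately---verify that all the integrals involved are finite so that Fubini applies and $\int K_T^0(\omega,\gamma)\overline{K_T^0(\lambda,\gamma)}\,F_1(d\gamma)$ is meaningful; here condition (C1) on $F_0$ (which controls $K_T^0$ near the origin through \eqref{Shwartz} and at infinity through $K_T^0(\omega,\cdot)\in L^2(F_0)$) together with $\psi\in L^2(F_0\otimes F_0)$ is exactly what is needed. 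A secondary point is that the key identity can only be asserted on the dense subspace $\mathcal{L}^e_{\Pi_T}$ before (i) is known, since $\|\cdot\|_{F_1}$ need not be continuous with respect to $\|\cdot\|_{F_0}$ a priori; one therefore transfers only the one-sided bound $I-\psi\ge 0$ to all of $H_0$, which already suffices to locate $\sigma(\psi)$ in $(-\infty,1]$.
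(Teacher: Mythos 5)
Your proposal is correct, and its skeleton coincides with the paper's: reduce to Theorem \ref{two}, establish the quadratic-form identity $\langle(I-\psi)\phi,\phi\rangle_{F_0}=\Vert\phi\Vert_{F_1}^2$ on $\mathcal{L}^e_{\Pi_T}$ (the paper's equation \eqref{1}, which it obtains from \eqref{imp} rather than by the direct kernel computation you sketch --- but your computation, with the Hermitian symmetry of $K_T^0$ and $\overline{e_r}=e_{-r}$, does reproduce it), and then show that condition (i) of Theorem \ref{two} is equivalent to $1\notin\sigma(\psi)$. Where you genuinely diverge is in that last equivalence. The paper argues via the spectral theorem for compact self-adjoint operators: in one direction it produces a unit eigenvector with $\psi\phi=\phi$ (implicitly using that a nonzero spectral point of a compact self-adjoint operator is an eigenvalue) and derives $\Vert\phi\Vert_{F_1}=0$; in the other it expands a hypothetical norm-degenerating sequence $\phi_n$ in an eigenbasis and reaches a contradiction from $\sum_j a_{nj}^2(1-\lambda_j)\ge\epsilon$. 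You instead use only boundedness, self-adjointness and closedness of the spectrum: $1\notin\sigma(\psi)$ together with $\sigma(\psi)\subseteq(-\infty,1]$ gives $I-\psi\ge\delta I$, hence the two-sided norm comparison; conversely $\Vert\phi\Vert_{F_0}\le c^{-1}\Vert\phi\Vert_{F_1}$ gives $I-\psi\ge c^2 I$ on a dense subspace, hence everywhere, hence $0\notin\sigma(I-\psi)$. This is cleaner and dispenses with compactness for this step (compactness is still needed elsewhere, e.g.\ to know $\psi$ is Hilbert--Schmidt for Theorem \ref{two}). Your cautionary remarks are also well placed: the identity can only be asserted on $\mathcal{L}^e_{\Pi_T}$ before (i) is known, and only the one-sided bound $I-\psi\ge 0$ transfers to $H_0$ a priori; and the Fubini/integrability issue in the kernel computation (which requires $K_T^0(\omega,\cdot)\in L^2(F_1)$ for $\psi$ to be well defined) is a genuine point that the paper itself glosses over.
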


\begin{proof}
From \eqref{imp}, by putting $ \phi_1 = \phi_2 = \phi $, and the definition of the operator $ \psi $ in \eqref{opr}, we get
\begin{equation}\label{1}
\frac{{\Vert \phi \Vert}^2_{F_1}}{{\Vert \phi \Vert}^2_{F_0}} = 1 - \frac{{\langle \psi\phi, \phi\rangle}_{F_0}}{{\Vert \phi \Vert}^2_{F_0}}
\end{equation}
for all $ \phi \in \mathcal{L}_{\Pi_T} (F_0) $. This simply implies that first $ \sigma(\psi) \subseteq (-\infty, 1] $, and second there exists a 
finite positive constant $ C $ such that $ {\Vert \phi \Vert}_{F_1} \leq C {\Vert \phi \Vert}_{F_0}$ for all $ \phi \in \mathcal{L}^e_{\Pi_T}  $ 
since $ \psi $ is a bounded operator. This fact shows that proving $ \psi \in L^2(F_0 \otimes F_0) $ is helping us to verify half of what we 
need in the first condition of Theorem \ref{two} as well. What remains is to show that $ 1 \notin \sigma (\psi) $ if and only if there exists 
a positive constant $ c $ such that $ {\Vert \phi \Vert}_{F_0} \leq c {\Vert \phi \Vert}_{F_1}$ for all $ \phi \in \mathcal{L}^e_{\Pi_T}  $. 

First, suppose that $ {\Vert \phi \Vert}_{F_0} \leq c \, {\Vert \phi \Vert}_{F_1}$ for some $ c>0 $. If $ 1 \in \sigma (\psi) $, it means that 
there exists $ \phi \in \mathcal{L}_{\Pi_T}(F_0) $ with $ {\Vert \phi \Vert}_{F_0} = 1 $ such that  $ \psi\phi = \phi $. Putting it in \eqref{1}, 
we get $ {\Vert \phi \Vert}_{F_1} = 0 $ which is contradiction. Conversely, suppose $ 1 \notin \sigma (\psi) $, and also there exists a
sequence $ \phi_n \in \mathcal{L}^e_{\Pi_T} $ such that $ {\Vert \phi_n \Vert}_{F_0} = 1 $ for all $ n \geq 1 $, and 
$ {\Vert \phi_n \Vert}_{F_1} \rightarrow 0 $ as $ n \rightarrow \infty $. Since $ \psi $ is a self-adjoint compact operator, by Corollary 5 on
p. 905 in \cite{Dunford_1963}, there exists a countable orthonormal basis for $ \mathcal{L}_{\Pi_T}(F_0) $ consisting of eigenvectors 
of $ \psi $, denoting them by $ g_j, j=1,2, . . . $ with corresponding eigenvalues $ \lambda_j $. Now, each $ \phi_n $ has the representation 
$ \phi_n = \sum_{j=1}^{\infty} a_{nj}g_j$ for $ a_{nj} \in \mathbb{R} $. Putting this sequence  \eqref{1}, we get that $ {\langle \psi\phi_n, 
\phi_n \rangle}_{F_0} \rightarrow 1 $ which means $ \sum_{j=1}^{\infty} a^2_{nj}\lambda_j \rightarrow 1 $ as $ n \rightarrow \infty $. 
Now, since $ 1 = {{\Vert \phi_n \Vert}^2_{F_0}} = \sum_{j=1}^{\infty} a^2_{nj} $, we can rewrite the above equation as $ 0 \leq 
\sum_{j=1}^{\infty} a^2_{nj}(1 - \lambda_j) \rightarrow 0 $ (This quantity is non-negative since all the eigenvalues are bounded above 
by 1). Since $ 1 \notin \sigma (\psi) $, and $ \lbrace \lambda_j, j=1,2, . . . \rbrace $ has no accumulation points in $ \mathbb{C} $ 
except possibly $ 0 $ (See \cite{Dunford_1963}, Corollary 5, p. 905), there exists $ \epsilon>0 $ such that $ \sup \, \lbrace \lambda_j, j 
\geq 1 \rbrace  = 1 - \epsilon $. However, this implies that $ \sum_{j=1}^{\infty} a^2_{nj}(1 - \lambda_j) \geq \epsilon
 \sum_{j=1}^{\infty} a^2_{nj} = \epsilon $ for all $ n \in \mathbb{N} $, which is contradiction by the fact that this sequence must go 
 to $ 0 $ when $ n $ goes to $ \infty $. This completes the proof.   
\end{proof}

\begin{remark}\label{equivalent norms}
Notice that based on the proof of Theorem \ref{HS}, we can change the first condition in Theorem \ref{two} to\\ 

${(i)}^{\prime}$ There exists a positive constant $ c $ such that $ {\Vert \phi \Vert}_{F_0} \leq c \, {\Vert \phi \Vert}_{F_1} $ 
for all $ \phi \in \mathcal{L}^e_{\Pi_T} $. 
\end{remark}

As Lemma \ref{equivalence} emphasizes that the behavior of the spectral measure at origin does not affect the structure of the 
space $ \mathcal{L}_{\Pi_T}(F) $, one might expect the same formation in terms of the equivalence of Gaussian measures. The 
following theorem shows that changing the spectral measure on bounded subsets of $ \mathbb{R}^d $ will not affect the 
equivalence of the corresponding GRFs. In other words, for checking the equivalence of GRFs, only the behavior of their 
spectral measures at infinity is important.

\begin{thm}\label{bdd}
Suppose two GRFs with stationary increments have spectral measures $ F_0 $ and $ F_1 $ such that $ F_0 $ satisfies 
the condition $ (C1) $, and $ F_0 = F_1 \, on \, I^c $, where $ I $ is a bounded subset of $ \mathbb{R}^d $. Then, these 
two GRFs are locally equivalent.   
\end{thm}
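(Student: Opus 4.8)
The plan is to apply Theorem~\ref{HS} on every cube $\Pi_T$, $T>0$: once the two fields are shown to be equivalent on each $\Pi_T$, local equivalence follows since any bounded $D\subseteq\mathbb R^d$ is contained in some $\Pi_T$, and mutual absolute continuity of the induced laws on $\mathbb R^{\Pi_T}$ is preserved by the measurable coordinate-restriction map $\mathbb R^{\Pi_T}\to\mathbb R^{D}$. Fix $T>0$ and write $\mu:=F_0-F_1$; this is concentrated on the bounded set $I$, and from \eqref{Con} together with the boundedness of $I$ one gets the basic estimate $\int_I|\gamma|^2\,|\mu|(d\gamma)\le\int_I|\gamma|^2\,(F_0+F_1)(d\gamma)<\infty$, which is used repeatedly.

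First I would rewrite the function $\psi$ of Theorem~\ref{HS} in a form adapted to the bounded support of $\mu$. Using the reproducing property in the form $\int_{\mathbb R^d}K^0_T(\omega,\gamma)\overline{K^0_T(\lambda,\gamma)}\,F_0(d\gamma)=K^0_T(\omega,\lambda)$,
\begin{equation*}
\psi(\omega,\lambda)=\int_{\mathbb R^d}K^0_T(\omega,\gamma)\overline{K^0_T(\lambda,\gamma)}\,(F_0-F_1)(d\gamma)=\int_I K^0_T(\omega,\gamma)\overline{K^0_T(\lambda,\gamma)}\,\mu(d\gamma),
\end{equation*}
an absolutely convergent integral by \eqref{Shwartz} applied to $K^0_T(\omega,\cdot),K^0_T(\lambda,\cdot)\in\mathcal L_{\Pi_T}(F_0)$ and the estimate above. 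To see $\psi\in\mathcal L_{\Pi_T}(F_0)\otimes\mathcal L_{\Pi_T}(F_0)$, observe that for each fixed $\gamma$ the function $(\omega,\lambda)\mapsto K^0_T(\omega,\gamma)\overline{K^0_T(\lambda,\gamma)}$ is, by the formula for the reproducing kernel of $\mathcal L_{\Pi_T}(F_0)\otimes\mathcal L_{\Pi_T}(F_0)$ recalled in Section~\ref{pr}, the reproducing kernel of that space at the diagonal point $(\gamma,\gamma)$; hence it lies in the space and has norm $K^0_T(\gamma,\gamma)$. Taking $C_1>\sup_{\gamma\in I}|\gamma|$ in Proposition~\ref{zero bhv} gives $K^0_T(\gamma,\gamma)\le C_2|\gamma|^2$ for all $\gamma\in I$, so $\int_I K^0_T(\gamma,\gamma)\,|\mu|(d\gamma)<\infty$; thus $\gamma\mapsto K^0_T(\cdot,\gamma)\overline{K^0_T(\cdot,\gamma)}$ is Bochner integrable into $\mathcal L_{\Pi_T}(F_0)\otimes\mathcal L_{\Pi_T}(F_0)$, and since point evaluation is continuous on this RKHS the Bochner integral coincides pointwise with $\psi$. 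Hence $\psi\in\mathcal L_{\Pi_T}(F_0)\otimes\mathcal L_{\Pi_T}(F_0)$, and in particular the associated operator on $\mathcal L_{\Pi_T}(F_0)$ is Hilbert-Schmidt and self-adjoint. (Alternatively one can first check $\psi\in L^2(F_0\otimes F_0)$ and then, by a Fubini computation, verify that the orthogonal projection of $\psi$ onto $\mathcal L_{\Pi_T}(F_0)\otimes\mathcal L_{\Pi_T}(F_0)$ equals $\psi$.)

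It then remains to verify $1\notin\sigma(\psi)$. Suppose $1\in\sigma(\psi)$; since $\psi$ is compact, self-adjoint and $1\ne0$, there exists $\phi\in\mathcal L_{\Pi_T}(F_0)$ with $\|\phi\|_{F_0}=1$ and $\psi\phi=\phi$. The relation $\|\phi\|_{F_1}^2=\|\phi\|_{F_0}^2-\langle\psi\phi,\phi\rangle_{F_0}$ (equivalently, \eqref{1} in the proof of Theorem~\ref{HS}) then forces $\|\phi\|_{F_1}^2=1-1=0$. But $F_1=F_0$ on $I^c$, so $0=\|\phi\|_{F_1}^2\ge\int_{I^c}|\phi(\gamma)|^2\,F_0(d\gamma)$, whence $\phi=0$ $F_0$-a.e. on $I^c$. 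Since by (C1) the density $f_0$ is strictly positive on $\{|\gamma|>k\}$, $\phi$ vanishes Lebesgue-a.e. on the nonempty open set $\{|\gamma|>\max(k,\sup_{\gamma\in I}|\gamma|)\}$; but $\phi$ is the restriction to $\mathbb R^d$ of an entire function on $\mathbb C^d$ by Lemma~\ref{Pitt}, so the identity theorem gives $\phi\equiv0$, contradicting $\|\phi\|_{F_0}=1$. Therefore $1\notin\sigma(\psi)$, and Theorem~\ref{HS} yields equivalence on $\Pi_T$; letting $T$ vary gives local equivalence.

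The main obstacle is the step $1\notin\sigma(\psi)$: this is precisely where the boundedness of the set $I$ on which $F_0$ and $F_1$ differ is essential, and the argument there rests on two ingredients developed in Section~\ref{pr} — the strict positivity of $f_0$ at infinity provided by (C1), and the fact (Lemma~\ref{Pitt}) that elements of $\mathcal L_{\Pi_T}(F_0)$ are genuine entire functions, so that vanishing on an open set propagates to the whole space. The remaining estimates, controlling $\psi$ through $\int_I|\gamma|^2\,|\mu|(d\gamma)<\infty$ and the diagonal bound $K^0_T(\gamma,\gamma)\le C_2|\gamma|^2$ of Proposition~\ref{zero bhv}, are routine.
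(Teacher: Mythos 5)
Your proof is correct and follows the same overall strategy as the paper --- verify the two conditions of Theorem \ref{HS} on each cube $\Pi_T$, use Proposition \ref{zero bhv} together with \eqref{Con} to control the reproducing kernel on the bounded set where the measures differ, and invoke entireness of elements of $\mathcal{L}_{\Pi_T}(F_0)$ (Lemma \ref{Pitt}) plus the positivity of $f_0$ from (C1) to exclude $1$ from $\sigma(\psi)$ --- but several sub-steps are executed differently. The paper introduces the intermediate measure $\widetilde F_1 = \mathbf{1}_{I^c}F_0$, proves $F_0\sim\widetilde F_1$ and $F_1\sim\widetilde F_1$ separately, and concludes by transitivity; you work directly with the signed measure $\mu=F_0-F_1$ supported in $I$, which collapses the two stages into one and incidentally sidesteps the question of whether $F_1$ itself is regular enough to play the role of $F_0$ in the second stage. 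For membership of $\psi$ in $\mathcal{L}_{\Pi_T}(F_0)\otimes\mathcal{L}_{\Pi_T}(F_0)$ you use a Bochner integral of the diagonal kernels $K^0_T(\cdot,\gamma)\overline{K^0_T(\cdot,\gamma)}$ against $|\mu|$, with the norm bound $K^0_T(\gamma,\gamma)\leq C_2|\gamma|^2$ doing the work, whereas the paper first bounds $\Vert\psi\Vert_{F_0\otimes F_0}$ and then checks that the orthogonal projection of $\psi$ onto the tensor space equals $\psi$ (you note this alternative yourself). Finally, for $1\notin\sigma(\psi)$ the paper computes $\psi\phi$ explicitly as the conjugate of a projection of $\overline{\phi}\,\mathbf{1}_I$ and compares norms, while you read off $\Vert\phi\Vert_{F_1}=0$ from the quadratic-form identity \eqref{1} and then use $F_1=F_0$ on $I^c$; both routes reduce to the same conclusion that $\phi$ vanishes $F_0$-a.e.\ off a bounded set and hence, being entire, vanishes identically. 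The essential ingredients, and the precise places where boundedness of $I$ and condition (C1) enter, are the same; your version is slightly more economical.
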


\begin{proof}
Define $ \widetilde{F}_1(d\lambda) = {\huge 1 }_{I^c}(\lambda) F_0(d\lambda)$. First, we show that $ F_0 $ and $\widetilde{F}_1 $ 
will produce locally equivalent GRFs with stationary increments. For that, fix $ T>0 $. We will investigate the equivalence of 
measures on $ \Pi_T $. The function $ \psi $ appearing in Theorem \ref{HS} in this case is given by
\begin{equation*}
\psi(\omega, \lambda) = \int_{I}\!K_T^0(\omega, \gamma)\overline{K_T^0(\lambda, \gamma)}\,F_0(d\gamma).
\end{equation*}
Notice that by the reproducing kernel property, 
\begin{equation}\label{eq:proj}
\psi(\omega, \lambda) = \pi_{\mathcal{L}_{\Pi_T}} \left( K_T^0 (\omega, .) {\huge 1 }_{I}(.)  \right) (\lambda) =
 \overline{\pi_{\mathcal{L}_{\Pi_T}} \left( K_T^0 (\lambda, .) {\huge 1 }_{I}(.)  \right) (\omega)}
\end{equation}
The specific representation of the function $ \psi(\omega, \lambda) $ in (\ref{eq:proj}) helps us to show that 
$ \psi(\omega, \lambda) \in \mathcal{L}_{\Pi_T} (F_0) \otimes \mathcal{L}_{\Pi_T} (F_0) $. The idea of the proof is to first 
show that $ \psi(\omega, \lambda) \in L^2 \left( F_0 \otimes F_0 \right) $ and then, use the projection technique to further 
derive that $ \psi(\omega, \lambda) \in \mathcal{L}_{\Pi_T} (F_0) \otimes \mathcal{L}_{\Pi_T} (F_0) $. Note that
\begin{equation}\label{eq:new_1}
\begin{split}
{\Vert \psi \Vert}^2_{F_0 \otimes F_0}  
&= \int_{\mathbb{R}^d}\!{\left\Vert \pi_{\mathcal{L}_D(F_0)}\left( K_T^0(\omega, .) {\huge 1 }_{I}(.) \right) \right\Vert}^2_{F_0}\,F_0(d\omega)   \\
& \leq  \int_{\mathbb{R}^d}\!{\left\Vert K_T^0(\omega, .){\huge 1 }_{I}(.) \right\Vert}^2_{F_0}\,F_0(d\omega)  \\
&= \int_{\mathbb{R}^d}\!\left( \int_{\mathbb{R}^d}\!{| K^0_T(\omega,\gamma)|}^2 {\huge 1 }_{I}(\gamma) \,F_0(d\gamma) \right)  F_0(d\omega)   \\
&= \int_{\mathbb{R}^d}\!{\huge 1 }_{I}(\gamma) K^0_T(\gamma,\gamma)\,F_0(d\gamma)   \\
& \leq  C \int_{I} |\gamma|^2 \!\,F_0(d\gamma) <  + \infty.
\end{split}
\end{equation}

The second inequality in \eqref{eq:new_1} is based on Proposition \ref{zero bhv} and the fact that $ I $ is 
bounded. Now, we prove that the projection of $ \psi(\omega, \lambda) $ to the space $ \mathcal{L}_{\Pi_T} (F_0) \otimes \mathcal{L}_{\Pi_T} (F_0) $ is in fact itself. This verifies that $ \psi(\omega, \lambda) \in \mathcal{L}_{\Pi_T} (F_0) \otimes \mathcal{L}_{\Pi_T} (F_0) $. To this end, observe that

\begin{equation*}
\begin{split} 
\big( \pi_{\mathcal{L}_{\Pi_T} (F_0) \otimes \mathcal{L}_{\Pi_T} (F_0)}(\psi)\big) (\omega, \lambda) &= {\left\langle \psi, K^0_T(\omega, .) \otimes K^0_T(\lambda, .)   \right\rangle}_{F_0 \otimes F_0}\\
&= \int_{\mathbb{R}^d}\!\int_{\mathbb{R}^d}\!\psi(x, y)\overline{K^0_T(\omega, x)\overline{K^0_T(\lambda, y)}}\,F_0(dy)F_0(dx)\\
&= \int_{\mathbb{R}^d}\!\int_{\mathbb{R}^d}\!\pi_{\mathcal{L}_{\Pi_T}} \left( K_T^0 (x, .) {\huge 1 }_{I}(.)  \right) (y) \overline{K^0_T(\omega, x)\overline{K^0_T(\lambda, y)}}\\
&   \hspace{10em} \times \,F_0(dy)F_0(dx)\\
&= \int_{\mathbb{R}^d}\! \overline{\pi_{\mathcal{L}_{\Pi_T}} \left( K_T^0 (\lambda, .) {\huge 1 }_{I}(.)  \right) (x)} \, \overline{K^0_T(\omega, x)} \, F_0(dx) \\
&= \overline{\pi_{\mathcal{L}_{\Pi_T}} \left( K_T^0 (\lambda, .) {\huge 1 }_{I}(.)  \right) (\omega)} \\
&= \psi(\omega, \lambda).
\end{split}
\end{equation*}
This implies $ \psi \in \mathcal{L}_{\Pi_T}(F_0)\otimes\mathcal{L}_{\Pi_T}(F_0) $.\\


It remains to show that $ 1 \notin \sigma(\psi) $. For that purpose, take an arbitrary $ \phi \in 
\mathcal{L}_{\Pi_T}(F_0) $, and observe that (We use the fact that $ K^0_T(\omega, \lambda) 
= \overline{K^0_T(\lambda, \omega)} $, see \cite{Aronszajn_1950}, p. 344.)
\[
\begin{split}  
\left(\psi\phi\right)(\lambda) &= \int_{\mathbb{R}^d}\!\phi(\omega)\psi(\lambda,\omega)\,F_0(d\omega)\\
&= \int_{\mathbb{R}^d}\!\phi(\omega)\left( \int_{I}\!K^0_T(\lambda, \gamma)\overline{K^0_T(\omega, \gamma)}\,F_0(d\gamma)\right)\,F_0(d\omega)\\
&= \overline{\int_{I}\!\int_{\mathbb{R}^d}\!\overline{\phi(\omega)}\,\overline{K^0_T(\lambda, \gamma)}
\overline{K^0_T(\gamma, \omega)}\,F_0(d\omega)F_0(d\gamma)}\\
&= \overline{\int_{I}\!\overline{\phi(\gamma)}\,\overline{K^0_T(\lambda, \gamma)}\,F_0(d\gamma)}\\
&= \overline{\pi_{\mathcal{L}_{\Pi_T}(F_0)}\left( \overline{\phi}{\huge 1 }_{I}\right)(\lambda)}.
\end{split}
\]
Therefore, if $ \psi\phi = \phi $, it implies in particular that $ {\Vert \phi \Vert}_{F_0} \leq {\Vert \overline{\phi} {\huge 1}_{I} \Vert}_{F_0} $. 
This means $ \phi = 0 $ almost everywhere with respect to $ F_0 $ in $ I^c $. Hence, since $ \phi $ is an entire function, this implies that $ \phi = 0 $. 
Thus, $ 1 $ cannot be in the spectrum of $ \psi $.\\
So far, we proved GRFs with spectral measures $ F_0 $ and $ \widetilde{F}_1 $ are locally equivalent, but since $ F_0 = F_1 $ on $ I $, 
similarly, we can say that $ F_1 $ and $ \widetilde{F}_1 $ produce locally equivalent GRFs. Putting these two together, we get the desired result.         
\end{proof}

Theorems \ref{two} and \ref{HS} give necessary and sufficient conditions for equivalence of GRFs with stationary increments, 
but  it might be difficult to verify the conditions in these theorems. In the literature, 
there are sufficient conditions for equivalence of certain GRFs in terms of their spectral densities. These conditions 
are easily verifiable once the two spectral densities are known. For example, we refer to \cite{Ibragimov_Rozanov_1978}, 
Theorem 17, p. 104, \cite{Skorokhod_Yadrenko_1973}, Theorem 4, and \cite{Yadrenko_1983}, theorem 4, p. 156 for 
stationary GRFs; and to \cite{VanZanten_2007,VanZanten_2008}, and \cite{Stein_2004} for some nonstationary cases. 
The following is the main theorem of this paper which gives an explicit sufficient condition in terms of the spectral measures
for the equivalence of GRFs with stationary increments.

\begin{thm}\label{almost}
Suppose that the spectral measure $ F_0 $ and $ F_1 $ have positive densities $ f_0 $ and $ f_1 $ with respect to the 
Lebesgue measure, and $ F_0 $ satisfies the condition $ (C1) $. If there exists a finite constant $ C > 0 $ such that 
$ {\Vert \phi \Vert}_{F_0} \leq C {\Vert \phi \Vert}_{F_1} $ for all $ \phi \in \mathcal{L}^e_{\Pi_T} $, and 
\begin{equation}\label{Int}
\int_{\vert \lambda \vert > k}\!{\left( \frac{f_1(\lambda) - f_0(\lambda)}{f_0(\lambda)}\right)}^2 K^0_T(\lambda,\lambda)
f_0(\lambda)\,d\lambda < \infty
\end{equation}
for some $ k > 0 $, then GRFs with stationary increments and spectral measures $ F_0 $ and $ F_1 $ are equivalent 
on $ {\Pi_T} $.  
\end{thm}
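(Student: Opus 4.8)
The plan is to apply Theorem~\ref{HS}: we already have one half of condition (i) by hypothesis (the bound ${\Vert \phi \Vert}_{F_0} \leq C {\Vert \phi \Vert}_{F_1}$, which is exactly $(i)'$ in Remark~\ref{equivalent norms}), and by the discussion preceding Theorem~\ref{HS} the condition $\psi \in L^2(F_0\otimes F_0)$ supplies the reverse inequality ${\Vert \phi \Vert}_{F_1} \leq C' {\Vert \phi \Vert}_{F_0}$. So it suffices to verify the two remaining conditions: that the function $\psi$ of Theorem~\ref{HS} lies in $\mathcal{L}_{\Pi_T}(F_0)\otimes\mathcal{L}_{\Pi_T}(F_0)$, and that $1\notin\sigma(\psi)$. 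For the latter, note that $1\notin\sigma(\psi)$ follows once we know $\psi\in\mathcal{L}_{\Pi_T}(F_0)\otimes\mathcal{L}_{\Pi_T}(F_0)$ together with ${\Vert \phi \Vert}_{F_0} \leq C {\Vert \phi \Vert}_{F_1}$, since by equation~\eqref{1} of the proof of Theorem~\ref{HS} the reverse norm bound rules out $1$ being an eigenvalue (if $\psi\phi=\phi$ with ${\Vert \phi \Vert}_{F_0}=1$ then ${\Vert \phi \Vert}_{F_1}=0$, contradicting the hypothesis). Thus the entire burden reduces to showing $\psi\in\mathcal{L}_{\Pi_T}(F_0)\otimes\mathcal{L}_{\Pi_T}(F_0)$.

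To show $\psi\in\mathcal{L}_{\Pi_T}(F_0)\otimes\mathcal{L}_{\Pi_T}(F_0)$ I would follow the two-step strategy already used in the proof of Theorem~\ref{bdd}: first establish $\psi\in L^2(F_0\otimes F_0)$, then show its orthogonal projection onto $\mathcal{L}_{\Pi_T}(F_0)\otimes\mathcal{L}_{\Pi_T}(F_0)$ equals $\psi$ itself. For the $L^2$ bound, write, using the reproducing kernel property of $K^0_T$,
\begin{equation*}
\psi(\omega,\lambda) = \int_{\mathbb{R}^d}\! K^0_T(\omega,\gamma)\overline{K^0_T(\lambda,\gamma)}\,\frac{f_0(\gamma)-f_1(\gamma)}{f_0(\gamma)}\,F_0(d\gamma) = \pi_{\mathcal{L}_{\Pi_T}}\!\Big(K^0_T(\omega,\cdot)\,\tfrac{f_0-f_1}{f_0}\Big)(\lambda),
\end{equation*}
so that
\begin{equation*}
{\Vert \psi \Vert}^2_{F_0\otimes F_0} \le \int_{\mathbb{R}^d}\!\Big\Vert K^0_T(\omega,\cdot)\,\tfrac{f_0-f_1}{f_0}\Big\Vert^2_{F_0}\,F_0(d\omega) = \int_{\mathbb{R}^d}\!\int_{\mathbb{R}^d}\!|K^0_T(\omega,\gamma)|^2\,\Big(\tfrac{f_1(\gamma)-f_0(\gamma)}{f_0(\gamma)}\Big)^2 f_0(\gamma)\,d\gamma\,F_0(d\omega),
\end{equation*}
and then exchange the order of integration and use $\int_{\mathbb{R}^d}|K^0_T(\omega,\gamma)|^2 F_0(d\omega) = K^0_T(\gamma,\gamma)$ to reduce this to $\int_{\mathbb{R}^d}\big(\tfrac{f_1-f_0}{f_0}\big)^2 K^0_T(\gamma,\gamma)f_0(\gamma)\,d\gamma$. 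Splitting this integral at $|\gamma|=k$, the tail part is finite by hypothesis~\eqref{Int}, while on $\{|\gamma|\le k\}$ one invokes Proposition~\ref{zero bhv} to bound $K^0_T(\gamma,\gamma)\le C_2|\gamma|^2$ and uses that $f_0,f_1$ are bounded on this compact set (together with $f_0\ge c k^{-\eta}>0$ there) to conclude the integrand is integrable. Hence $\psi\in L^2(F_0\otimes F_0)$. The projection identity is then essentially a verbatim repetition of the computation in the proof of Theorem~\ref{bdd}: applying the reproducing kernel property three times and using $\int K^0_T(x,\gamma)\overline{K^0_T(\omega,x)}F_0(dx) = K^0_T(\omega,\gamma)$ collapses the triple integral back to $\psi(\omega,\lambda)$.

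I expect the main obstacle to be the near-origin part of the $L^2$ estimate. The hypothesis~\eqref{Int} only controls the behavior for $|\lambda|>k$, so one must separately argue that $\big(\tfrac{f_1-f_0}{f_0}\big)^2 K^0_T(\lambda,\lambda)f_0(\lambda)$ is integrable on the ball $\{|\lambda|\le k\}$; this is where positivity of the densities and condition~$(C1)$ (which forces $f_0$ bounded away from $0$ on the region $|\lambda|>k$, and a separate local argument or additional mild regularity of $f_0,f_1$ near the origin) enters, combined with the quadratic bound $K^0_T(\lambda,\lambda)=O(|\lambda|^2)$ from Proposition~\ref{zero bhv}. One subtlety worth flagging: strictly one needs $f_1/f_0$ locally bounded near the origin, which should be extracted from positivity and (implicit) local regularity of the densities; if the densities are merely measurable one may need to note that $\psi$ as constructed via the reproducing kernels depends only on $F_0,F_1$ on a neighborhood of infinity up to the contribution handled by Theorem~\ref{bdd}, so that the near-origin behavior can be modified without affecting equivalence. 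Apart from this point, all the remaining manipulations are routine applications of the reproducing kernel property and Fubini's theorem.
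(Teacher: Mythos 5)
Your overall strategy coincides with the paper's: reduce to Theorem~\ref{HS}, note that the hypothesis ${\Vert \phi \Vert}_{F_0} \leq C {\Vert \phi \Vert}_{F_1}$ disposes of $1\notin\sigma(\psi)$ via equation~\eqref{1}, write $\psi(\omega,\cdot)=\pi_{\mathcal{L}_{\Pi_T}(F_0)}\bigl(K^0_T(\omega,\cdot)\tfrac{f_0-f_1}{f_0}\bigr)$, bound ${\Vert\psi\Vert}^2_{F_0\otimes F_0}$ by the integral in~\eqref{Int}, and then verify the projection identity. The one place you diverge is exactly the spot you flag as ``the main obstacle'': the contribution from $\{|\gamma|\le k\}$. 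Your primary argument there does not work as stated: you invoke boundedness of $f_0,f_1$ on the compact set $\{|\gamma|\le k\}$, but nothing in the hypotheses gives this --- indeed the paper's model densities \eqref{anisotropic spectrals} blow up at the origin, and condition \eqref{Con} only controls $\int_{|\gamma|\le k}|\gamma|^2 f_i(\gamma)\,d\gamma$, which is not enough to bound $\int_{|\gamma|\le k}|\gamma|^2 f_1(\gamma)^2/f_0(\gamma)\,d\gamma$. The paper avoids this issue entirely by making your ``fallback'' the very first step: it invokes Theorem~\ref{bdd} at the outset to replace $f_1$ by $f_0$ on $\{|\lambda|\le k\}$ without affecting equivalence, after which the integrand vanishes identically near the origin and the whole $L^2$ estimate reduces to \eqref{Int}. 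Since you do name this reduction explicitly at the end, your proposal contains all the needed ingredients; it just presents the non-working direct bound as the main route and the correct reduction as a contingency, whereas the order should be reversed.
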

\begin{proof}
Applying Theorem \ref{bdd}, we can change the value of $ f_1 $ on any bounded set, without having any consequences 
on the equivalence. So, we assume here that $ f_0 = f_1 $ on $ \vert \lambda \vert \leq k $. The function $ \psi $ in Theorem 
\ref{HS} here will be of the form
\[
\begin{split}
\psi(\omega,\lambda) &= \int_{\mathbb{R}^d}\!K_T^0(\omega, \gamma)\overline{K_T^0(\lambda, \gamma)}
\bigg( f_0(\gamma) - f_1(\gamma) \bigg)\,d\gamma\\
&= \pi_{\mathcal{L}_{\Pi_T}(F_0)}\left( K_T^0(\omega, .)\frac{f_0 - f_1}{f_0}\right)(\lambda). 
\end{split}
\]
(Since $ {\left| K^0_T(\omega,\lambda)\right|}^{2} \leq K^0_T(\omega,\omega) K^0_T(\lambda, \lambda) $, 
\eqref{Int} implies that $ K_T^0(\omega, .)\frac{f_0 - f_1}{f_0}$ $ \in L^2(F_0)$ for all $ \omega \in \mathbb{R}^d $. Hence 
using the orthogonal projection is eligimate). Now, it follows that 
\[
\begin{split}
\int_{\mathbb{R}^d} \int_{\mathbb{R}^d}\!{\left| \psi(\omega,\lambda)\right|}^{2}\,F_0(d\lambda)F_0(d\omega)
&= \int_{\mathbb{R}^d}\!{\left\Vert \pi_{\mathcal{L}_D(F_0)}\left( K_T^0(\omega, .)\frac{f_0 - f_1}{f_0}\right) \right\Vert}^2_{F_0}\,F_0(d\omega)\\
& \leq  \int_{\mathbb{R}^d}\!{\left\Vert K_T^0(\omega, .)\frac{f_0 - f_1}{f_0} \right\Vert}^2_{F_0}\,F_0(d\omega)\\
&= \int_{\mathbb{R}^d}\!\left( \int_{\mathbb{R}^d}\!{| K^0_T(\omega,\gamma)|}^2 {\left( \frac{f_0(\gamma) - f_1(\gamma)}{f_0(\gamma)}\right)}^2 
\,F_0(d\gamma) \right) F_0(d\omega)\\
&= \int_{\mathbb{R}^d}\!{\left(\frac{f_0(\gamma) - f_1(\gamma)}{f_0(\gamma)}\right)}^2 K^0_T(\gamma,\gamma)f_0(\gamma)\,d\gamma\\
&= \int_{\vert \gamma \vert > k}\!{\left(\frac{f_1(\gamma) - f_0(\gamma)}{f_0(\gamma)}\right)}^2 K^0_T(\gamma,\gamma)f_0(\gamma)\,d\gamma < \infty
\end{split}
\]
by the integrability assumption \eqref{Int}. Hence $ \psi \in L^2(F_0 \otimes F_0) $. 

Now, we apply similar arguments as in the proof of Theorem \ref{bdd} to show that in fact $ \psi \in 
\mathcal{L}_{\Pi_T} (F_0) \otimes \mathcal{L}_{\Pi_T} (F_0) $. To this end, observe that
\[
\begin{split}
\big( \pi_{\mathcal{L}_{\Pi_T} (F_0) \otimes \mathcal{L}_{\Pi_T} (F_0)}(\psi)\big) (\omega, \lambda) 
&= {\left\langle \psi, K^0_T(\omega, .) \otimes K^0_T(\lambda, .)   \right\rangle}_{F_0 \otimes F_0}\\
&= \int_{\mathbb{R}^d}\!\int_{\mathbb{R}^d}\!\psi(x, y)\overline{K^0_T(\omega, x)\overline{K^0_T(\lambda, y)}}\,F_0(dy)F_0(dx)\\
&= \int_{\mathbb{R}^d}\!\int_{\mathbb{R}^d}\!\pi_{\mathcal{L}_{\Pi_T}(F_0)}\bigg( K_T^0(x, \cdot)\frac{f_0 - f_1}{f_0}\bigg)(y)
 \overline{K^0_T(\omega, x)\overline{K^0_T(\lambda, y)}}\\
&   \hspace{10em} \times \,F_0(dy)F_0(dx)\\
&= \int_{\mathbb{R}^d}\! \overline{\pi_{\mathcal{L}_{\Pi_T}(F_0)}\bigg( K_T^0(\lambda, \cdot)\frac{f_0 - f_1}{f_0}\bigg)(x)} 
\, \overline{K^0_T(\omega, x)} \, F_0(dx) \\
&= \overline{\pi_{\mathcal{L}_{\Pi_T}(F_0)}\bigg( K_T^0(\lambda, \cdot)\frac{f_0 - f_1}{f_0}\bigg)(\omega)} \\
&= \pi_{\mathcal{L}_{\Pi_T}(F_0)}\bigg( K_T^0(\omega, \cdot)\frac{f_0 - f_1}{f_0}\bigg)(\lambda) = \psi(\omega, \lambda).
\end{split}
\]
This completes the proof. 
\end{proof}

In  \eqref{Int}, in addition to the behavior of the spectral densities at infinity, the growth rate of the diagonal 
elements of the reproducing kernels of the space $ \mathcal{L}_{\Pi_T}(F_0) $ at infinity also plays an 
important role. Since finding explicit forms of reproducing kernels are difficult, we need at least to find 
upper bounds for the growth rate of the diagonal terms. The following condition on spectral density 
helps us to accomplish this task:

\begin{verse}
(C2) \, For spectral density $ f $, there exist an entire function $ \phi_0 $ on $ \mathbb{C}^d $ of finite 
exponential type such that $ f(\lambda) \asymp {\left| \phi_0(\lambda) \right|}^2 $ as $ \vert \lambda \vert 
\rightarrow \infty $ on $ \mathbb{R}^d $.
\end{verse}

The following lemma  shows that  under (C2)  we have an upper bound for the behavior of the reproducing 
kernels on the diagonal at infinity.

\begin{lemma}\label{exp}
Suppose $ f_0 $ is a spectral density such that it satisfies $ (C1) $ and $ (C2) $ for some entire function 
$ \phi_0 $. Then, for $ T>0 $, there exists a finite constant $C > 0$ such that the reproducing kernel 
$ K^0_T $ of $ \mathcal{L}_{\Pi_T}(f_0) $ satisfies
\begin{equation*}
{\left| K^0_T(\omega,\lambda)\right|}^2 \leq C \frac{K^0_T(\omega,\omega)}{f_0(\lambda)} 
\end{equation*}
for all  $ \omega,\,   \lambda \in \mathbb R^d $ with $|\lambda|$ large enough. In particular,
\begin{equation}
\left| K^0_T(\lambda,\lambda)\right| \leq  \frac{C}{f_0(\lambda)} 
\end{equation}
for all $ \lambda \in \mathbb R^d $ with $|\lambda|$ large enough.   
\end{lemma}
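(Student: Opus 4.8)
The plan is to use that, for each fixed $\omega$, the section $g := K^0_T(\omega,\cdot)$ is entire of exponential type bounded uniformly in $\omega$ and satisfies $\|g\|_{f_0}^2 = K^0_T(\omega,\omega)$, and then to convert this $L^2(f_0)$ bound into a pointwise bound by multiplying by the function $\phi_0$ furnished by (C2), landing in the classical Paley--Wiener setting.

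First I would fix $\omega \in \mathbb R^d$ and set $g = K^0_T(\omega,\cdot) \in \mathcal L_{\Pi_T}(f_0)$. By Lemma~\ref{Pitt}, $g$ is entire; by \eqref{entire}, $|g(z)| \le C\,\|g\|_{f_0}\,e^{M|z|} = C\,K^0_T(\omega,\omega)^{1/2} e^{M|z|}$ for all $z \in \mathbb C^d$, where $C$ and $M$ depend only on $T$ and $f_0$; and by \eqref{Shwartz}, $|g(\lambda)| \le C_2 |\lambda|\, K^0_T(\omega,\omega)^{1/2}$ for $|\lambda| \le C_1$. In particular $g$ has exponential type at most $M$, uniformly in $\omega$, so the product $h := g\,\phi_0$ is entire of finite exponential type at most $M_1 = M_1(T, f_0, \phi_0)$, again independent of $\omega$.

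The next step is to show $h|_{\mathbb R^d} \in L^2(\mathbb R^d)$ with $\|h\|_{L^2(\mathbb R^d)}^2 \le C\,K^0_T(\omega,\omega)$. Pick $k_0$ large enough that (C2) gives $c_2 f_0(\lambda) \le |\phi_0(\lambda)|^2 \le c_1 f_0(\lambda)$ for $|\lambda| > k_0$ (and, by (C1), $f_0(\lambda) > 0$ there), and take $C_1 = k_0$ in \eqref{Shwartz}. Then $\int_{|\lambda| > k_0} |g(\lambda)|^2 |\phi_0(\lambda)|^2\, d\lambda \le c_1 \|g\|_{f_0}^2 = c_1 K^0_T(\omega,\omega)$, while on the compact ball $\{|\lambda| \le k_0\}$ one bounds $|g(\lambda)|^2 \le C_2^2 |\lambda|^2 K^0_T(\omega,\omega)$ and uses that $\phi_0$ is bounded there, so that contribution is again $\le C\, K^0_T(\omega,\omega)$. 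Since $h$ is entire of exponential type $\le M_1$ with $h|_{\mathbb R^d} \in L^2(\mathbb R^d)$, the Paley--Wiener--Schwartz theorem shows $\widehat{h}$ is supported in $\overline{B(0, M_1)}$, and Cauchy--Schwarz together with Plancherel's theorem give $\sup_{\lambda \in \mathbb R^d} |h(\lambda)|^2 \le C_{M_1, d}\, \|h\|_{L^2(\mathbb R^d)}^2$, a constant depending only on $M_1$ and $d$. Combining, for every $\lambda \in \mathbb R^d$,
\[
|K^0_T(\omega,\lambda)|^2\, |\phi_0(\lambda)|^2 = |h(\lambda)|^2 \le C\, K^0_T(\omega,\omega);
\]
and for $|\lambda| > k_0$ we have $|\phi_0(\lambda)|^2 \ge c_2 f_0(\lambda) > 0$, hence $|K^0_T(\omega,\lambda)|^2 \le (C/c_2)\, K^0_T(\omega,\omega)/f_0(\lambda)$, the first assertion. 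Taking $\omega = \lambda$ and cancelling $K^0_T(\lambda,\lambda)$ (the inequality being trivial when it vanishes) gives $K^0_T(\lambda,\lambda) \le C/f_0(\lambda)$ for $|\lambda|$ large.

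The main obstacle is the pointwise estimate in the third step — obtaining a constant independent of $\omega$ — which works precisely because the exponential type of $g = K^0_T(\omega,\cdot)$, and hence of $h = g\,\phi_0$, is controlled uniformly in $\omega$; this uniformity is exactly the content of the constants $C, M$ in Lemma~\ref{xiao} not depending on the function. Everything else is bookkeeping: the $L^2(\mathbb R^d)$-integrability of $h$ comes from the reproducing property of $K^0_T$ combined with the regularity of elements of $\mathcal L_{\Pi_T}(f_0)$ near the origin established in Lemma~\ref{Pitt}.
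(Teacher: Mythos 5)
Your proof is correct, but it is organized differently from the paper's. The paper also multiplies by $\phi_0$ and invokes the Paley--Wiener theorem, but it does so for an entire orthonormal basis $\{\psi_k\}$ of the auxiliary space $\mathcal{L}_{\Pi_T}(|\phi_0|^2)$: Parseval shows the preimages $g_k$ are orthonormal in $L^2(B)$, Bessel's inequality yields $\sum_k|\psi_k(\lambda)|^2\le m(B)/|\phi_0(\lambda)|^2$, and then Cauchy--Schwarz on the expansion of $K^0_T(\omega,\cdot)$ in that basis (together with Lemma~\ref{equivalence} to pass between the $|\phi_0|^2$-norm and the $f_0$-norm) gives the bound. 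You instead work with the single section $g=K^0_T(\omega,\cdot)$, form $h=g\phi_0$, show $\|h\|_{L^2(\mathbb{R}^d)}^2\le C\,K^0_T(\omega,\omega)$ by splitting at $|\lambda|=k_0$, and then apply the Nikolskii-type estimate $\sup|h|^2\le C_{M_1,d}\|h\|_{L^2}^2$ for band-limited functions. The two arguments encode the same underlying fact (the norm of point evaluation on a Paley--Wiener-type space), but yours avoids the auxiliary RKHS, the orthonormal basis, and Lemma~\ref{equivalence} entirely; the price is that you must invoke the uniformity in $\omega$ of the exponential type from Lemma~\ref{xiao}/Lemma~\ref{Pitt} and the near-origin bound \eqref{Shwartz} to control the compact part of the $L^2$ integral, neither of which the paper's version of the argument needs (since there $\int|\psi_k\phi_0|^2\,d\lambda=1$ exactly, with $|\phi_0|^2$ as the weight on all of $\mathbb{R}^d$). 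Both routes are sound; yours is arguably more elementary and self-contained.
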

\begin{proof}
The idea of the proof is similar to the one in Lemma 3 in \cite{VanZanten_2008}. Put 
$ f(\lambda) = {\left| \phi_0(\lambda) \right|}^2 $. Since $ f $ and $ f_0 $ are comparable 
at $ \infty $, and $ f $ is bounded around $ 0 $, it is clear that $ f $ is satisfying both 
conditions \eqref{Con} and (C1). This means we can define $ \mathcal{L}_{\Pi_T}(f) $ 
the same way, and this space is also a RKHS. Consider an arbitrary orthonormal basis 
for this space, and denote them by $ \psi_k , k = 1, 2, . . . \,$. Now, by Lemma \ref{Pitt}, 
they are entire functions on $ \mathbb{C}^d $ with finite exponential type which doesn't 
depend on $ k $. Further, we know $ \psi_k \phi_0 \in L^2(\mathbb{R}^d) $ since 
\begin{equation*}
\int_{\mathbb{R}^d}\!{\vert \psi_k(\lambda) \phi_0(\lambda)\vert}^2\,d\lambda = 
\int_{\mathbb{R}^d}\!{\vert \psi_k(\lambda) \vert}^2 f(\lambda)\,d\lambda = 1 < \infty.
\end{equation*}
Therefore, we can apply the Paley-Wiener Theorem (\cite{Ronkin_1974}, Theorem 3.4.2, p. 171) 
to get $ \psi_k\phi_0 = \widehat{g}_k $ for certain functions $ g_k \in L^2(B) $ where $ B $ is 
a bounded subset of $ \mathbb{R}^d $ (Here $ \widehat{h} $ stands for the Fourier transform 
of $ h $). By Parseval's identity, we can deduce that $ g_k $'s are orthonormal in $ L^2(B) $. It 
follows from Bessel's inequality that 
\[
\begin{split}
\sum_{k}{\vert\psi_k(\lambda)\vert}^2 f(\lambda)
&= \sum_{k} {\left\vert \int_{B}\!e^{-i\langle t, \lambda \rangle}g_k(t)\,dt  \right\vert}^2\\
& \leq  \int_{B}\!{\left\vert e^{i\langle t, \lambda \rangle}\right\vert}^2\,dt = m(B), 
\end{split} 
\]
where $ m(B) $ is the Lebesgue measure of $ B $. Therefore,
\begin{equation*}
 \sum_{k}{\vert\psi_k(\lambda)\vert}^2 \leq m(B)/f(\lambda) \leq C/f_0(\lambda)
\end{equation*}
for  $ \lambda \in \mathbb R^d$ with $|\lambda|$ large enough. Now, consider the reproducing kernels of 
$ \mathcal{L}_{\Pi_T}(f_0) $, denoting them by $ K^0_T( \omega, .) $. Since $ f(\lambda) \asymp f_0(\lambda) $ 
as $ |\lambda| \rightarrow \infty $, by Lemma \ref{equivalence}, $ K^0_T( \omega, .) $ belong to 
$ \mathcal{L}_{\Pi_T}(f) $ as well for ll $ \omega \in \mathbb{R}^d $. Thus, we can expand it using the 
basis $ \psi_k $, and get $ K^0_T(\omega, \lambda) = \sum_{k}{\langle K^0_T(\omega, .), \psi_k \rangle}_{f} 
\psi_k(\lambda) $, and then by Cauchy-Schwarz and Lemma \ref{equivalence}, we get 
\begin{eqnarray*}
{\vert K^0_T(\omega, \lambda) \vert}^2
& \leq & {\Vert K^0_T(\omega, .) \Vert}^2_f\,\sum_{k}{\vert\psi_k(\lambda)\vert}^2\\
& \leq & c \, {\Vert K^0_T(\omega, .) \Vert}^2_{f_0}\,\sum_{k}{\vert\psi_k(\lambda)\vert}^2\\
&=&  c \, K^0_T(\omega, \omega) \,\sum_{k}{\vert\psi_k(\lambda)\vert}^2, 
\end{eqnarray*}
which makes the proof complete.
\end{proof}

Theorem \ref{almost} in combination with Lemma \ref{exp} leads to an appealing result. 
If the relative difference between two spectral densities is square integrable at infinity, then 
the corresponding GRFs with stationary increments will be locally equivalent. We finish this 
section by proving this fact.

\begin{thm}\label{main}
Suppose that the spectral measures $ F_0 $ and $ F_1 $ have positive densities $ f_0 $ and 
$ f_1 $ with respect to the Lebesgue measure, with  $ f_0 $ satisfying (C1)  and (C2) 
for some entire function $ \phi_0 $ on $ \mathbb{C}^d $ of finite exponential type. If there exists 
a finite constant $ k > 0 $ such that 
\begin{equation}\label{Int1}
\int_{\vert \lambda \vert > k}\!{\left( \frac{f_1(\lambda) - f_0(\lambda)}{f_0(\lambda)}\right)}^2\,d\lambda < \infty,
\end{equation}
then GRFs with stationary increments having spectral measures $ F_0 $ and $ F_1 $ are locally equivalent. 
\end{thm}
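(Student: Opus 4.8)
The plan is to recognize Theorem~\ref{main} as the instance of Theorem~\ref{almost} in which the diagonal of the reproducing kernel is controlled by Lemma~\ref{exp}. Fix $T>0$; it suffices to prove equivalence on $\Pi_T$, since $T$ is arbitrary. First I would normalize the low frequencies: because $f_0$ and $f_1$ are positive densities and $f_0$ satisfies (C1), Theorem~\ref{bdd} lets us change the value of $f_1$ on any bounded set without affecting local equivalence, so from now on I assume $f_1=f_0$ on $\{|\lambda|\le k\}$, where $k$ is chosen large enough that simultaneously: the integral in \eqref{Int1} runs over $\{|\lambda|>k\}$; the estimate $K^0_T(\lambda,\lambda)\le C/f_0(\lambda)$ of Lemma~\ref{exp} holds for $|\lambda|>k$ (this is where both (C1) and (C2) are used); and the tail $\delta(k):=\int_{|\lambda|>k}\bigl((f_1-f_0)/f_0\bigr)^2\,d\lambda$ is as small as we please (possible since \eqref{Int1} makes it finite, hence convergent to $0$ in $k$).

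With this reduction the second hypothesis \eqref{Int} of Theorem~\ref{almost} is immediate: Lemma~\ref{exp} gives
\[
\int_{|\lambda|>k}\Bigl(\tfrac{f_1(\lambda)-f_0(\lambda)}{f_0(\lambda)}\Bigr)^2 K^0_T(\lambda,\lambda)\,f_0(\lambda)\,d\lambda
\;\le\; C\int_{|\lambda|>k}\Bigl(\tfrac{f_1(\lambda)-f_0(\lambda)}{f_0(\lambda)}\Bigr)^2 d\lambda \;=\;C\,\delta(k)\;<\;\infty .
\]
The first hypothesis, $\|\phi\|_{F_0}\le C\|\phi\|_{F_1}$ for all $\phi\in\mathcal{L}^e_{\Pi_T}$, is the substantive point. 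The key observation is that for $\phi\in\mathcal{L}^e_{\Pi_T}\subseteq\mathcal{L}_{\Pi_T}(F_0)$ the reproducing property gives $|\phi(\lambda)|=|\langle\phi,K^0_T(\lambda,\cdot)\rangle_{F_0}|\le\|\phi\|_{F_0}\,K^0_T(\lambda,\lambda)^{1/2}$, so Lemma~\ref{exp} yields the pointwise bound $|\phi(\lambda)|^2 f_0(\lambda)\le C\|\phi\|_{F_0}^2$ for $|\lambda|>k$. Since $f_1=f_0$ on $\{|\lambda|\le k\}$ we have $\|\phi\|_{F_0}^2-\|\phi\|_{F_1}^2=\int_{|\lambda|>k}|\phi|^2(f_0-f_1)\,d\lambda$, and Cauchy--Schwarz together with the pointwise bound gives
\[
\bigl|\|\phi\|_{F_0}^2-\|\phi\|_{F_1}^2\bigr|
\le\Bigl(\int_{|\lambda|>k}|\phi|^2 f_0\Bigr)^{1/2}\Bigl(\int_{|\lambda|>k}|\phi|^2 f_0\bigl(\tfrac{f_0-f_1}{f_0}\bigr)^2\Bigr)^{1/2}
\le\sqrt{C\,\delta(k)}\;\|\phi\|_{F_0}^2 .
\]
Choosing $k$ so large that $\sqrt{C\,\delta(k)}\le\tfrac12$ gives $\|\phi\|_{F_1}^2\ge\tfrac12\|\phi\|_{F_0}^2$, that is $\|\phi\|_{F_0}\le\sqrt2\,\|\phi\|_{F_1}$ on $\mathcal{L}^e_{\Pi_T}$.

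Both hypotheses of Theorem~\ref{almost} are now verified, so the (modified) GRFs are equivalent on $\Pi_T$; undoing the modification via Theorem~\ref{bdd} and letting $T$ range over all positive reals yields local equivalence of the original fields. I expect the main obstacle to be precisely the norm comparison $\|\phi\|_{F_0}\le C\|\phi\|_{F_1}$: square-integrability of the relative difference controls only high frequencies, so one must first eliminate the low-frequency discrepancy through Theorem~\ref{bdd}, and then convert the $L^2$-smallness of $(f_1-f_0)/f_0$ into an $L^\infty$-type perturbation of the norm by exploiting the bound $|\phi(\lambda)|^2\lesssim\|\phi\|_{F_0}^2/f_0(\lambda)$ that Lemma~\ref{exp} supplies for functions in $\mathcal{L}_{\Pi_T}(F_0)$. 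A secondary point to watch is the bookkeeping of the threshold $k$, which must be chosen to serve (C1), (C2), \eqref{Int1} and the smallness requirement all at once.
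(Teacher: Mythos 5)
Your proposal is correct and reaches the conclusion by a genuinely different route for the one substantive step. The paper's proof also feeds Lemma \ref{exp} into condition \eqref{Int} of Theorem \ref{almost} and also normalizes the low frequencies via Theorem \ref{bdd}, but it then handles the norm comparison by proving $1\notin\sigma(\psi)$ (the equivalent condition from Theorem \ref{HS}): it represents $\psi\phi$ as a conjugated projection of $\overline{\phi}\,\tfrac{f_0-f_1}{f_0}\mathbf{1}_{\{|\gamma|>k\}}$, invokes the Paley--Wiener theorem to show $\phi\phi_0$ is bounded (hence $|\phi|^2 f_0\lesssim \|\phi\|_{F_0}^2$), and rules out a fixed point $\psi\phi=\phi$ by a limiting argument in $k$. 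You instead verify the hypothesis $\|\phi\|_{F_0}\le C\|\phi\|_{F_1}$ of Theorem \ref{almost} head-on: the reproducing property plus Lemma \ref{exp} give the same pointwise bound $|\phi(\lambda)|^2 f_0(\lambda)\le C\|\phi\|_{F_0}^2$ without a second pass through Paley--Wiener, and Cauchy--Schwarz with the smallness of $\delta(k)$ yields $\|\phi\|_{F_1}^2\ge\tfrac12\|\phi\|_{F_0}^2$. Both arguments thus rest on the identical estimate $|\phi|^2 f_0\lesssim\|\phi\|_{F_0}^2$; yours is more self-contained and makes the quantitative role of the tail $\delta(k)$ explicit, while the paper's is the natural companion to the operator-theoretic Theorem \ref{HS} and in particular establishes the boundedness of $\phi\phi_0$ for every $\phi\in\mathcal{L}_{\Pi_T}(f_0)$, a fact of independent use. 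One caveat you share with the paper: the appeal to Theorem \ref{bdd} to replace $f_1$ by $f_0$ on $\{|\lambda|\le k\}$ compares $F_1$ with the modified measure, neither of which is assumed to satisfy (C1) as that theorem formally requires; since the paper's own proof makes the identical reduction (``in spirit of Theorem \ref{bdd}''), this is not a gap introduced by your argument, but it is worth being aware of.
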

\begin{proof}
Thanks to Lemma \ref{exp} and Theorem \ref{almost}, it is sufficient to prove that $ 1 \notin \sigma(\psi) $. 
In spirit of Theorem \ref{bdd}, we can assume that $ f_0 = f_1 $ on 
$ \vert \lambda \vert \leq k $. Now, take an arbitrary element $ \phi \in \mathcal{L}_D(f_0) $, and 
observe that by using the multidimensional Paley-Wiener Theorem (\cite{Ronkin_1974}, Theorem 
3.4.2. p. 171), we derive that $ \phi \phi_0 $ is the inverse Fourier transform of a squared integrable 
function  $g$ with bounded support, $ B $ in $ \mathbb{R}^d $. This implies that 
\[
\begin{split}
{\left| \phi(\lambda)\phi_0(\lambda)\right|}^2 &= {\left| \int_{B}\!e^{-i\langle\lambda,\gamma\rangle}g(\gamma)\,d\gamma \right|}^2\\
& \le  \int_{B}\!g^2(\lambda)\,d\lambda <  \infty
\end{split}
\]
for all $ \lambda \in \mathbb{R}^d $. This means $ \phi\phi_0 $ is bounded on $ \mathbb{R}^d $. This fact 
together with \eqref{Int1} imply that $ \phi \frac{f_0 - f_1}{f_0} \in L^2(f_0) $. Now, observe that
\[
\begin{split}
\left(\psi\phi\right)(\lambda) &= \int_{\mathbb{R}^d}\!\phi(\omega)\psi(\lambda, \omega)\,F_0(d\omega)\\
&= \int_{\mathbb{R}^d}\!\phi(\omega)\left( \int_{\vert\gamma\vert > k}\!K_T^0(\lambda, \gamma)
\overline{K_T^0(\omega, \gamma)}\,\left(\frac{f_0(\gamma) - f_1(\gamma)}{f_0(\gamma)}\right)\,F_0(d\gamma) \right)\,F_0(d\omega)\\
&= \overline{\int_{\vert\gamma\vert > k}\!\int_{\mathbb{R}^d}\!\overline{\phi(\omega)}\,\overline{K_T^0(\lambda, \gamma)}
\overline{K_T^0(\gamma, \omega)}\left(\frac{f_0(\gamma) - f_1(\gamma)}{f_0(\gamma)}\right)\,F_0(d\omega)F_0(d\gamma)}\\
&= \overline{\int_{\vert\gamma\vert > k}\!\overline{\phi(\gamma)} \left(\frac{f_0(\gamma) - f_1(\gamma)}{f_0(\gamma)}\right)
 \overline{K_T^0(\lambda, \gamma)}\,F_0(d\gamma)}\\
&= \overline{\pi_{\mathcal{L}_{\Pi_T}(F_0)}\left( \overline{\phi} \frac{f_0 - f_1}{f_0}{\huge 1 }_{\lbrace\vert\gamma\vert > k\rbrace}\right)(\lambda)}.
\end{split}
\]
Now, similar to the proof of Theorem \ref{bdd}, if $ \psi \phi = \phi $, we get that $ {\| \phi \|  }_{F_0} \leq
 {\left\Vert \overline{\phi} \frac{f_0 - f_1}{f_0}{\huge 1 }_{\lbrace\vert\gamma\vert > k\rbrace} \right\Vert  }_{F_0} $. 
Letting $ k \rightarrow \infty $, by the Dominated Convergence Theorem, we get $ {\| \phi \|  }_{F_0} = 0 $. This 
implies that $ \phi = 0 $. Thus, $ 1 $ cannot be in the spectrum of $ \psi $ and this concludes proof.       
\end{proof}

\section{Application}

In this section, we apply the results in Section \ref{main results} to some anisotropic GRFs with stationary 
increments. In particular, we consider GRFs with stationary increments and spectral density of the form
\begin{equation}\label{anisotropic spectrals}
f(\lambda) = \frac{1}{\left( \sum_{j=1}^{d} |{\lambda}_j|^{\beta_j} \right)^\gamma},
\end{equation}
where $ \lambda = (\lambda_1, . . . , \lambda_d) \in \mathbb{R}^d \backslash \lbrace 0 \rbrace $, $ \beta_j>0 $ for 
all $ j = 1, . . ., d $, and $ \gamma > \sum_{j=1}^{d} \frac{1}{\beta_j}$. The latter condition guaranties the integrability 
condition in \eqref{Con} for spectral measures (See Proposition 2.1 in \cite{Xue_Xiao_2011}). Fractal and smoothness 
properties of this family of GRFs are discussed in \cite{Xue_Xiao_2011}. Now we apply Theorem 
\ref{main} to determine the equivalence of Gaussian measures induced by these GRFs. 

To this end, first notice that  (C1) is obviously satisfied for spectral densities of the form \eqref{anisotropic spectrals}. 
The next lemma shows that these spectral densities also satisfy (C2).

\begin{lemma} \label{Lem:41}
Spectral density functions of the form \eqref{anisotropic spectrals} satisfy condition $ (C2) $. 
\end{lemma}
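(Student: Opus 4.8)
The plan is to construct an entire function $\phi_0$ on $\mathbb{C}^d$ of finite exponential type with $|\phi_0(\lambda)|^2\asymp f(\lambda)$ as $|\lambda|\to\infty$; equivalently, writing $\rho(\lambda)=\sum_{j=1}^d|\lambda_j|^{\beta_j}$ and $Q=\sum_{j=1}^d\beta_j^{-1}$, a $\phi_0$ with $|\phi_0(\lambda)|\asymp\rho(\lambda)^{-\gamma/2}$ for all large $|\lambda|$. The relevant scaling is the anisotropic dilation $\delta_s\lambda=(s^{1/\beta_1}\lambda_1,\dots,s^{1/\beta_d}\lambda_d)$, $s>0$, under which $\rho$ is homogeneous of degree $1$ and the Jacobian equals $s^{Q}$, so that $Q$ plays the role of a homogeneous dimension. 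I would first reduce to $\gamma$ small: split $\gamma=\gamma_1+\dots+\gamma_M$ into finitely many small pieces (small enough to stay clear of the resonant exponents — in particular below $2Q$ — and to make the error estimates below work), build $\phi_{0,m}$ as below for each $\gamma_m$, and set $\phi_0=\prod_{m=1}^M\phi_{0,m}$, which is again entire of finite exponential type and satisfies $|\phi_0(\lambda)|\asymp\prod_m\rho(\lambda)^{-\gamma_m/2}=\rho(\lambda)^{-\gamma/2}$. So it suffices to treat a small exponent $\gamma$.

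For such a $\gamma$ I would fix a cutoff $\chi\in C_c^\infty(\mathbb{R}^d)$ with $\chi\equiv 1$ near $0$, set $q=\chi\,\rho^{-(Q-\gamma/2)}$, and take $\phi_0=\widehat q$; since $q\in L^1(\mathbb{R}^d)$ has compact support, $\phi_0$ is entire of finite exponential type by Paley--Wiener. To analyse $\phi_0$ at infinity I would split $q$ into its part near the origin and its part away from the origin. The near-origin part carries the homogeneous vertex singularity of $\rho^{-(Q-\gamma/2)}$, and its Fourier transform is a $\delta$-homogeneous function of degree $-\gamma/2$ — this is the main term, which should be $\asymp\rho(\lambda)^{-\gamma/2}$. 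The away-from-origin part is bounded, with conormal singularities along the coordinate hyperplanes $\{x_j=0\}$ (coming from $|x_j|^{\beta_j}$) together with the smooth cutoff; its Fourier transform is $O(|\lambda|^{-N})$ for every $N$ away from the coordinate axes and, near the $j$-th axis, of size $O(|\lambda_j|^{-(\beta_j+1)})$, hence $o\big(\rho(\lambda)^{-\gamma/2}\big)$ throughout once $\gamma$ is small enough that $\beta_j\gamma/2<\beta_j+1$ for every $j$ (this is exactly the constraint governing the splitting). Consequently $|\phi_0(\lambda)|\asymp\rho(\lambda)^{-\gamma/2}$, i.e.\ $|\phi_0(\lambda)|^2\asymp f(\lambda)$ for large $|\lambda|$, \emph{provided the main term does not vanish}. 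When every $\beta_j\le 2$ this non-vanishing is immediate: from $\rho(x)^{-b}=\Gamma(b)^{-1}\int_0^\infty t^{b-1}\prod_j e^{-t|x_j|^{\beta_j}}\,dt$ with $b=Q-\gamma/2$, and the fact that $e^{-t|x_j|^{\beta_j}}$ is the characteristic function of a symmetric $\beta_j$-stable law with strictly positive density $p_{\beta_j,t}$, one gets $\widehat{\rho^{-b}}(\lambda)=\Gamma(b)^{-1}\int_0^\infty t^{b-1}\prod_j p_{\beta_j,t}(\lambda_j)\,dt>0$ for all $\lambda\ne 0$, and, being $\delta$-homogeneous of degree $-\gamma/2$ and continuous, it is $\asymp\rho(\lambda)^{-\gamma/2}$.

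The hard part will be the non-vanishing of the main (homogeneous) term when some $\beta_j>2$: then $e^{-t|x_j|^{\beta_j}}$ is no longer positive definite, the subordination argument collapses, and one has to show directly that the angular profile of $\widehat{\rho^{-(Q-\gamma/2)}}$ on the anisotropic sphere $\{\rho=1\}$ is bounded below by a positive constant (or replace $\rho$ by an equivalent, smoother anisotropic gauge for which this is visible). A plausible route is to localize to the sectors in which a single coordinate dominates $\rho(\lambda)$ and, on each such sector, compare with the one-dimensional model: for $0<a<1$ the explicit entire function $z\mapsto\int_0^1 t^{a-1}e^{izt}\,dt$ is of exponential type $\le 1$ and satisfies $\big|\int_0^1 t^{a-1}e^{izt}\,dt\big|\asymp|z|^{-a}$ on $\mathbb{R}$, so that finite products of such functions realize any prescribed power decay (this is the anisotropic $d$-dimensional counterpart of the one-dimensional construction behind Lemma~3 of \cite{VanZanten_2008}). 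The remaining points — the exponential-type property via Paley--Wiener and the control of the two lower-order contributions — are routine once $\gamma$ has been made small by the splitting.
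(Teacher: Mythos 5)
Your overall strategy---build $\phi_0$ as the Fourier transform of a compactly supported function carrying the right anisotropic vertex singularity at the origin, then invoke Paley--Wiener---is sound in outline, and the reduction to small $\gamma$ by taking finite products of entire functions of finite exponential type is fine. But there is a genuine gap, which you yourself flag: the two-sided bound $\bigl|\widehat{\rho^{-(Q-\gamma/2)}}(\lambda)\bigr| \asymp \rho(\lambda)^{-\gamma/2}$ for the homogeneous main term is essentially the whole content of the lemma, and you only argue it (via subordination to symmetric stable densities) when every $\beta_j \le 2$, whereas \eqref{anisotropic spectrals} allows arbitrary $\beta_j>0$. Even in the stable range the argument is incomplete: subordination gives strict positivity off the coordinate hyperplanes, but the upper bound---finiteness and boundedness of the angular profile on $\{\rho=1\}$, in particular near $\{\lambda_j=0\}$, where the $t$-integral $\int_0^\infty t^{b-1}\prod_j p_{\beta_j,t}(\lambda_j)\,dt$ is delicate because $p_{\beta_j,t}(0)\sim c\,t^{-1/\beta_j}$ as $t\to 0$---is asserted (``being \dots continuous, it is $\asymp$'') rather than proved. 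For the hard case $\beta_j>2$ the ``plausible route'' does not assemble into a proof: a finite product $\prod_j\varphi_j(\lambda_j)$ of one-dimensional functions with $|\varphi_j(\lambda_j)|\asymp|\lambda_j|^{-a_j}$ decays like $|\lambda|^{-\sum_j a_j}$ on the diagonal but only like $|\lambda_j|^{-a_j}$ on the $j$-th axis, so for $d\ge 2$ no choice of the $a_j$ reproduces the profile of $\rho^{-\gamma/2}$; and one cannot glue different entire functions on different sectors while retaining a single entire function of finite exponential type.

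The paper takes a much shorter route: after replacing $f$ by the comparable density $\bigl(1+\sum_j|\lambda_j|^{\beta_j}\bigr)^{-\gamma}$, it quotes the construction in Lemma 2.3 of \cite{Luan_Xiao_2012} of a compactly supported $\phi\in L^2(B)$ with $|\widehat\phi(\lambda)|^2\asymp\bigl(1+\sum_j|\lambda_j|^{\beta_j}\bigr)^{-\gamma}$ as $|\lambda|\to\infty$, and then applies Paley--Wiener exactly as you do. So your final step matches the paper's, but the heart of the matter---producing, for all $\beta_j>0$, a compactly supported function whose Fourier transform has the prescribed anisotropic decay---is outsourced there and remains unproven in your write-up. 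To complete your approach you would need to supply the two-sided angular bound for $\widehat{\rho^{-(Q-\gamma/2)}}$ in full generality (or adapt the Luan--Xiao construction), not merely positivity in the stable range.
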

\begin{proof}
First of all, it is obvious that 
\begin{equation*}
\frac{1}{\left( \sum_{j=1}^{d} |{\lambda}_j|^{\beta_j} \right)^\gamma}  \asymp \frac{1}{\left( 1 + \sum_{j=1}^{d} |{\lambda}_j|^{\beta_j} \right)^\gamma},
\end{equation*}
as $ \vert \lambda \vert \rightarrow \infty $. Therefore, it suffices to prove the lemma for functions of the form on the right hand side. 
Now, similar to the construction made in the proof of Lemma 2.3 in \cite{Luan_Xiao_2012}, we can find a function $ \phi \in L^2(B) $ for 
some bounded subset $ B \subseteq \mathbb{R}^d $ such that 
\begin{equation*}
\frac{1}{\left( 1 + \sum_{j=1}^{d} |{\lambda}_j|^{\beta_j} \right)^\gamma}  \asymp {\big\vert \widehat{\phi}(\lambda) \big\vert}^2,
\end{equation*}
as $ \vert \lambda \vert \rightarrow \infty $, which $ \widehat{\phi} $ is the Fourier transform of $ \phi $. By the Paley-Wiener Theorem 
(\cite{Ronkin_1974}, Theorem 3.4.2. p. 171), $ \widehat{\phi} $ is actually the restriction on $ \mathbb{R}^d $ of an entire function 
on $ \mathbb{C}^d $ with finite exponential type. This finishes the proof.
\end{proof}

The next theorem proves that, under certain conditions, the mixture of spectral densities 
of the form \eqref{anisotropic spectrals} will be equivalent to the one with the lowest decay rate at infinity. Similar 
results of this type have been proved  by \cite{VanZanten_2007} and \cite{Cheridito_2001} for linear combinations 
of independent fractional Brownian motions.

\begin{thm}\label{mixed_1}
Suppose $ X $ and $ Y $ are two independent centered GRFs with stationary increments with spectral densities 
of the form \eqref{anisotropic spectrals} with parameters $ (\beta_j, \gamma) $ and $ (\beta^{\prime}_j, \gamma^{\prime}) $, 
respectively. Then, if
\begin{equation}\label{mixed}
\gamma^{\prime} > \frac{1}{2} \sum_{j=1}^{d} \bigg( \frac{1}{\beta^{\prime}_j} \bigg) + \max_{1 \leq j \leq d} 
\bigg\lbrace \frac{\beta_j}{\beta^{\prime}_j} \bigg \rbrace \gamma,
\end{equation}
then $ X $ and $ X+Y $ are locally equivalent. 
\end{thm}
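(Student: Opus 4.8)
The plan is to invoke Theorem \ref{main} with $f_0$ taken to be the spectral density of $X$ and $f_1$ the spectral density of $X+Y$. Since $X$ and $Y$ are independent, centered, and satisfy $X(0)=Y(0)=0$, the field $X+Y$ has stationary increments, and by \eqref{spectral representation} its spectral measure is the sum of those of $X$ and $Y$; in particular $f_1=f_0+g$, where $g$ is the strictly positive spectral density of $Y$, of the form \eqref{anisotropic spectrals} with parameters $(\beta'_j,\gamma')$. The density $f_0$ is of the form \eqref{anisotropic spectrals} with parameters $(\beta_j,\gamma)$, so it satisfies (C1) trivially and (C2) by Lemma \ref{Lem:41}. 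Hence it remains only to verify the integrability condition \eqref{Int1}, which here reads
\begin{equation*}
\int_{|\lambda|>k}\left(\frac{g(\lambda)}{f_0(\lambda)}\right)^2 d\lambda
= \int_{|\lambda|>k}\frac{\big(\sum_{j=1}^d |\lambda_j|^{\beta_j}\big)^{2\gamma}}{\big(\sum_{j=1}^d |\lambda_j|^{\beta'_j}\big)^{2\gamma'}}\,d\lambda < \infty
\end{equation*}
for some $k>0$.

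Next I would set $\kappa=\max_{1\le j\le d}\{\beta_j/\beta'_j\}$ and $S(\lambda)=\sum_{j=1}^d |\lambda_j|^{\beta'_j}$. For each $j$ we have $|\lambda_j|^{\beta'_j}\le S(\lambda)$, hence $|\lambda_j|^{\beta_j}\le S(\lambda)^{\beta_j/\beta'_j}\le 1+S(\lambda)^{\kappa}$, and summing over $j$ gives $\sum_{j}|\lambda_j|^{\beta_j}\le d\,(1+S(\lambda)^{\kappa})$. Since $S(\lambda)\to\infty$ as $|\lambda|\to\infty$, one may choose $k$ so large that $S(\lambda)\ge 1$ on $\{|\lambda|>k\}$; there $\sum_j|\lambda_j|^{\beta_j}\le 2d\,S(\lambda)^{\kappa}$, so the integrand above is at most $(2d)^{2\gamma}\,S(\lambda)^{2\kappa\gamma-2\gamma'}$. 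It therefore suffices to show
\begin{equation*}
\int_{|\lambda|>k}\Big(\sum_{j=1}^d |\lambda_j|^{\beta'_j}\Big)^{-(2\gamma'-2\kappa\gamma)}\,d\lambda < \infty,
\end{equation*}
whose exponent is positive because \eqref{mixed} forces $\gamma'>\kappa\gamma$.

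Finally, the change of variables $\mu_j=|\lambda_j|^{\beta'_j}$ followed by the radial substitution $\mu_j=s\,\nu_j$ with $\nu$ ranging over the unit simplex (exactly as in the proof of Proposition 2.1 of \cite{Xue_Xiao_2011}) turns the last integral into a convergent Dirichlet integral over the simplex times $\int_{s>m} s^{\,\sum_j 1/\beta'_j-(2\gamma'-2\kappa\gamma)-1}\,ds$, which is finite precisely when $2\gamma'-2\kappa\gamma>\sum_{j=1}^d 1/\beta'_j$, i.e. when $\gamma'>\tfrac12\sum_{j=1}^d 1/\beta'_j+\kappa\gamma$. This is hypothesis \eqref{mixed}, so \eqref{Int1} holds and Theorem \ref{main} yields the local equivalence of $X$ and $X+Y$. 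The only step that requires care is the anisotropic integral estimate; its two ingredients — the per-coordinate bound $|\lambda_j|^{\beta_j}\le 1+S(\lambda)^{\kappa}$, which is elementary, and the scaling computation for $\int(\sum_j|\lambda_j|^{\beta'_j})^{-\alpha}\,d\lambda$, which is essentially already contained in \cite{Xue_Xiao_2011} — are both routine, so I expect no serious obstacle.
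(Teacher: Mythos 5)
Your proposal is correct, and its skeleton coincides with the paper's: both reduce the claim, via Lemma \ref{Lem:41} and Theorem \ref{main}, to the finiteness of
$\int_{|\lambda|>k}\bigl(\sum_{j}|\lambda_j|^{\beta_j}\bigr)^{2\gamma}\bigl(\sum_{j}|\lambda_j|^{\beta'_j}\bigr)^{-2\gamma'}\,d\lambda$.
Where you diverge is in how that anisotropic integral is estimated. The paper splits the numerator term by term using $(a+b)^p\le 2^p(a^p+b^p)$, reducing to the $d$ integrals $I_j$, and then integrates out one coordinate at a time with the one-dimensional formula \eqref{integral}, distinguishing the cases $k=j$ and $k\neq j$ according to which coordinate is bounded away from zero. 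You instead dominate the entire numerator by a power of the denominator's base, $\sum_j|\lambda_j|^{\beta_j}\le 2d\,S(\lambda)^{\kappa}$ with $\kappa=\max_j\beta_j/\beta'_j$ and $S(\lambda)=\sum_j|\lambda_j|^{\beta'_j}$, which collapses the problem to a single integral $\int S(\lambda)^{-(2\gamma'-2\kappa\gamma)}d\lambda$ evaluated by the substitution $\mu_j=|\lambda_j|^{\beta'_j}$ and a simplex/radial decomposition in the spirit of Proposition 2.1 of \cite{Xue_Xiao_2011}. Both routes consume exactly the hypothesis \eqref{mixed} (one can check that the paper's case-by-case conditions also reduce to $2\gamma'>2\kappa\gamma+\sum_j 1/\beta'_j$), so neither is sharper; your version avoids the iterated case analysis and makes the source of the threshold $\frac12\sum_j 1/\beta'_j+\kappa\gamma$ more transparent, at the cost of importing the generalized polar-coordinate computation. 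All the auxiliary steps you use check out: the additivity of spectral measures for independent fields, the choice of $k$ ensuring $S(\lambda)\ge 1$ on $\{|\lambda|>k\}$, and the elementary bound $S^{\beta_j/\beta'_j}\le 1+S^{\kappa}$.
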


\begin{proof}
Using Lemma \ref{Lem:41} and Theorem \ref{main}, we only need to show 
\begin{equation}\label{multi}
\int_{|\lambda|>1}\!\frac{ {\left( \sum_{j=1}^{d} {|\lambda_j|}^{\beta_j} \right)}^{2\gamma }   }{ {\left( \sum_{j=1}^{d} 
{|\lambda_j|}^{\beta^{\prime}_j} \right)}^{2\gamma^{\prime}} }\,d\lambda < \infty.
\end{equation}

By using the inequality $ {(a+b)}^p \leq 2^p(a^p+b^p) $, we can break the integral in \eqref{multi} into $ d $ integrals. Thus,
 it's enough to show for each fixed $ j = 1, ..., d $
\begin{equation}\label{single}
I_j := \int_{|\lambda|>1}\!\frac{ |\lambda_j|^{2\beta_j \gamma} } { {\left( \sum_{k=1}^{d} {|\lambda_k|}^{\beta^{\prime}_k} \right)}^{2\gamma^{\prime}} }\,d\lambda < \infty.
\end{equation} 
Since $ |\lambda|>1 $, this implies that $ |\lambda_k|>1/\sqrt{d} $ for some $ k \in \lbrace 1, . . ., d\rbrace $. We distinguish two cases: 
Case I is when $ k=j $, and Case II is when $ k \neq j $. In both cases, we use the following fact that,  for positive constants 
$ \beta $ and $ \gamma $, and nonnegative constant $ b$, there exists a finite positive constant $ c $ such that for all $ a>0 $
\begin{equation}\label{integral}
\begin{split}
\int_{0}^{\infty}\!\frac{x^b}{{\left( a + x^\beta \right)}^\gamma}\,dx
&= a^{-\left( \gamma - \frac{1}{\beta} - \frac{b}{\beta} \right)}\int_{0}^{\infty}\!\frac{y^b}{{\left( 1 + y^\beta \right)}^\gamma}\,dy\\
&=  \begin{cases} c\, a^{-\left( \gamma - \frac{1}{\beta} - \frac{b}{\beta} \right)} &\mbox{if } \beta \gamma - b > 1, \\ \infty 
& \mbox{if } \beta \gamma - b \leq 1. \end{cases} 
\end{split}
\end{equation}

First, let's consider case I. By applying  \eqref{integral} $d$ times, we have
\[
\begin{split}
I_j & \leq  \int_{\frac{1}{\sqrt{d}}}^{\infty}\! {|\lambda_j|}^{2\beta_j\gamma}\,\underbrace{\int_{0}^{\infty}\,. . .\,
\int_{0}^{\infty}}_{d - 1}\!\frac{d\lambda}{{\left( \sum_{r=1}^{d} {|\lambda_r|}^{\beta^{\prime}_r} \right)}^{2\gamma^{\prime}}}\\
& \leq  c^{\prime}\,\int_{\frac{1}{\sqrt{d}}}^{\infty}\!\frac{{|\lambda_j|}^{2\beta_j\gamma}}{\left( {|\lambda_j|}^{\beta^{\prime}_j}\right)^{2\gamma^{\prime} - \sum_{r \neq j}\frac{1}{\beta^{\prime}_r} }}\,d\lambda_j < \infty 
\end{split}
\]  
since $ \beta^{\prime}_j\left( 2\gamma^{\prime} -  \sum_{r \neq j}\frac{1}{\beta^{\prime}_r}  \right) > 2\beta_j \gamma + 1 $ due to 
condition \eqref{mixed}.

Next, we consider case II, where $ k \neq j $. Similar to case I, we use \eqref{integral} iteratively, but we 
take the integration in different order. Denote the integration in $ \lambda_i $ for $ i \neq j, k $ by $ d{\lambda}_{\backslash j,k } $, and observe
\[
\begin{split}
I_j &\leq  \int_{\frac{1}{\sqrt{d}}}^{\infty}\! d\lambda_k\,\underbrace{\int_{0}^{\infty}\,. . .\,\int_{0}^{\infty}}_{d - 1}\!\frac{{|\lambda_j|}^{2\beta_j\gamma}}{{\left( \sum_{r=1}^{d} {|\lambda_r|}^{\beta^{\prime}_r} \right)}^{2\gamma^{\prime}}}\,d\lambda_j d{\lambda}_{\backslash j,k }\\
&\leq  c\,\int_{\frac{1}{\sqrt{d}}}^{\infty}\! d\lambda_k\,\underbrace{\int_{0}^{\infty}\,. . .\,\int_{0}^{\infty}}_{d - 2}\!\frac{1}{{\left( \sum_{r=1}^{d} {|\lambda_r|}^{\beta^{\prime}_r} \right)}^{2\gamma^{\prime} - \frac{1}{\beta^{\prime}_j} - \frac{2\beta_j \gamma}{\beta^{\prime}_j} }}\, d{\lambda}_{\backslash j,k }\\
&\leq  c^{\prime}\,\int_{\frac{1}{\sqrt{d}}}^{\infty} \frac{1}{\left( {|\lambda_k|}^{\beta^{\prime}_k}\right)^{2\gamma^{\prime} -  \frac{2\beta_j \gamma}{\beta^{\prime}_j} - \sum_{r \neq k}{\frac{1}{\beta^{\prime}_r} }}}\,d\lambda_k < \infty, 
\end{split}
\]  
where the second and the third inequality follow since $ 2\gamma^{\prime}\beta^{\prime}_j > 2\beta_j\gamma + 1 $ and $ \beta^{\prime}_k \left( 2\gamma^{\prime} - \frac{2\beta_j \gamma}{\beta^{\prime}_j} - \sum_{r \neq k}\frac{1}{\beta^{\prime}_r}  \right) > 1 $, respectively using the assumption \eqref{mixed}. This finishes the proof. 
\end{proof}

Next, we consider a similar situation as in Theorem \ref{mixed_1}, but this time we put discrete spectral measure mixed with 
the ones of the form \eqref{anisotropic spectrals}. For that purpose, consider discrete spectral measure of the form
\begin{equation}\label{discrete}
F\left( \lbrace -\gamma^n \rbrace \right) = F\left( \lbrace \gamma^n \rbrace \right) = \alpha_n,
\end{equation}
where $ \gamma^n \in \mathbb{R}^d $, $ \alpha_n \geq 0 $, for $ n \geq 1 $, and $ \sum_{n=1}^{\infty} \frac{{|\gamma^n|}^2}
{1 + {|\gamma^n|}^2} \alpha_n < \infty $. If $ \lbrace \gamma^n, n=1,2,... \rbrace $ is a bounded subset of $ \mathbb{R}^d $, 
then in view of Theorem \ref{bdd}, this spectral measure will not affect the equivalence of Gaussian measures. Therefore, 
we consider here only the case where $ |\gamma^n| \rightarrow \infty $ as $ n \rightarrow \infty $. 

\begin{thm}\label{mixed_2}
Let $ X $ and $ Y $ be two independent centered GRFs with stationary increments with spectral measures $ F_X $ and $ F_Y $. 
Suppose $ F_X $ has density with respect to Lebesgue measure on $ \mathbb{R}^d $, denoted by $ f $, which satisfies both 
conditions $ (C1) $ and $ (C2) $, and $ F_Y $ is a discrete measure of the form \eqref{discrete}. Then, if  
\begin{equation}\label{sum}
\sum_{n>N} \frac{\alpha_n}{f(\gamma^n)} < \infty,
\end{equation}
for some $ N \geq 1 $, then $ X $ and $ X + Y $ are locally equivalent.
\end{thm}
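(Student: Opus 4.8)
The plan is to apply Theorem \ref{HS} with $F_0 := F_X$, which satisfies (C1) and (C2) by hypothesis, and $F_1 := F_X + F_Y$ (the spectral measure of $X+Y$, since $X$ and $Y$ are independent); because every bounded subset of $\mathbb{R}^d$ is contained in some cube $\Pi_T$ and equivalence on $\Pi_T$ restricts to equivalence on subsets, it suffices to prove equivalence on $\Pi_T$ for each fixed $T>0$. One first notes that $F_Y$ satisfies \eqref{Con} --- this is precisely the summability imposed in \eqref{discrete} --- so that $F_1$ is a legitimate spectral measure. Using the reproducing--kernel identity $\int_{\mathbb{R}^d} K^0_T(\omega,\gamma)\overline{K^0_T(\lambda,\gamma)}\,F_0(d\gamma)=K^0_T(\omega,\lambda)$, the function $\psi$ of Theorem \ref{HS} then reduces to
\begin{equation*}
\psi(\omega,\lambda) = -\int_{\mathbb{R}^d}\!K^0_T(\omega,\gamma)\overline{K^0_T(\lambda,\gamma)}\,F_Y(d\gamma) = -\sum_{n\ge 1}\alpha_n\Big[K^0_T(\omega,\gamma^n)\overline{K^0_T(\lambda,\gamma^n)}+K^0_T(\omega,-\gamma^n)\overline{K^0_T(\lambda,-\gamma^n)}\Big].
\end{equation*}

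To verify the first condition of Theorem \ref{HS}, I would observe that each summand is a rank-one tensor of the form $g(\omega)\overline{g(\lambda)}$ with $g=K^0_T(\cdot,\pm\gamma^n)\in\mathcal{L}_{\Pi_T}(F_0)$ (by the reproducing property and closedness of $\mathcal{L}_{\Pi_T}(F_0)$ under conjugation), hence lies in the closed subspace $\mathcal{L}_{\Pi_T}(F_0)\otimes\mathcal{L}_{\Pi_T}(F_0)$ of $L^2(F_0\otimes F_0)$, with $L^2(F_0\otimes F_0)$-norm at most $\alpha_n\big(K^0_T(\gamma^n,\gamma^n)+K^0_T(-\gamma^n,-\gamma^n)\big)$. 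Since the density $f$ of $F_0$ (which may be taken symmetric) satisfies (C1) and (C2), Lemma \ref{exp} gives a constant $C$ with $K^0_T(\pm\gamma^n,\pm\gamma^n)\le C/f(\gamma^n)$ as soon as $|\gamma^n|$ is large; enlarging $N$ so that this holds for all $n>N$ --- possible since $|\gamma^n|\to\infty$ --- the tail of the series is bounded in norm by $2C\sum_{n>N}\alpha_n/f(\gamma^n)<\infty$ by \eqref{sum}, while the finitely many terms with $n\le N$ contribute a finite amount. Thus the series converges absolutely in $L^2(F_0\otimes F_0)$, its sum $\psi$ lies in $\mathcal{L}_{\Pi_T}(F_0)\otimes\mathcal{L}_{\Pi_T}(F_0)$, and in particular $\psi\in L^2(F_0\otimes F_0)$.

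For the second condition, $1\notin\sigma(\psi)$: once $\psi\in L^2(F_0\otimes F_0)$ it induces via \eqref{opr} a bounded self-adjoint operator on $\mathcal{L}_{\Pi_T}(F_0)$, and a reproducing--kernel computation in the spirit of the proofs of Theorems \ref{bdd} and \ref{main} (again using that $\mathcal{L}_{\Pi_T}(F_0)$ is conjugation-closed) gives $(\psi\phi)(\omega) = -\int_{\mathbb{R}^d} K^0_T(\omega,\gamma)\phi(\gamma)\,F_Y(d\gamma)$, whence $\langle\psi\phi,\phi\rangle_{F_0} = -\Vert\phi\Vert_{F_Y}^2\le 0$ for all $\phi\in\mathcal{L}_{\Pi_T}(F_0)$; the same identity also follows from \eqref{1} together with $\Vert\phi\Vert_{F_1}^2=\Vert\phi\Vert_{F_0}^2+\Vert\phi\Vert_{F_Y}^2$ and $F_Y\ge 0$. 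Hence $\psi$ is negative semidefinite, $\sigma(\psi)\subseteq(-\infty,0]$, and in particular $1\notin\sigma(\psi)$ --- or, more directly, an eigenvector $\phi\ne 0$ with $\psi\phi=\phi$ would satisfy $\Vert\phi\Vert_{F_0}^2=-\Vert\phi\Vert_{F_Y}^2\le 0$, a contradiction. Theorem \ref{HS} then yields equivalence on $\Pi_T$ and, letting $T$ vary, local equivalence of $X$ and $X+Y$.

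The main obstacle is the norm estimate $\Vert\psi\Vert_{F_0\otimes F_0}^2\le 2C\sum_{n>N}\alpha_n/f(\gamma^n)$ in the second paragraph: this is exactly where conditions (C1)--(C2) and Lemma \ref{exp} are indispensable, converting the diagonal reproducing--kernel values $K^0_T(\gamma^n,\gamma^n)$ at the atoms of $F_Y$ into the explicit quantity $1/f(\gamma^n)$ appearing in hypothesis \eqref{sum}. By contrast, excluding $1$ from $\sigma(\psi)$ is immediate from the nonnegativity of $F_Y$, and requires none of the delicate truncation used in Theorems \ref{bdd} and \ref{main}, where the perturbing measure need not have a fixed sign.
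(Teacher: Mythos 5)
Your proof is correct and follows essentially the same route as the paper: decompose $\psi$ into rank-one tensors supported at the atoms of $F_Y$, bound its $L^2(F_0\otimes F_0)$-norm by $\sum_n \alpha_n K^0_T(\gamma^n,\gamma^n)$ via the triangle inequality, and invoke Lemma \ref{exp} to convert this into hypothesis \eqref{sum}. The only (immaterial) difference is that you verify $1\notin\sigma(\psi)$ through negative semidefiniteness of $\psi$ within Theorem \ref{HS}, whereas the paper checks the equivalent condition ${(i)}^{\prime}$ of Remark \ref{equivalent norms} via the trivial inequality ${\Vert \phi \Vert}_{F_X} \leq {\Vert \phi \Vert}_{F_X+F_Y}$; your handling of the $\pm\gamma^n$ atoms and of the finitely many terms with $n\le N$ is in fact slightly more careful than the paper's.
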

\begin{proof}
First of all, using Theorem \ref{bdd}, we can assume $ \alpha_n = 0 $ for $ n = 1, . . ., N $, 
without having any consequences on the equivalence of Gaussian measures. Second, observe 
that for all $ \phi \in \mathcal{L}^e_{\Pi_T}$, $ {\Vert \phi \Vert}_{F_X} \leq {\Vert \phi \Vert}_{F_X + F_Y} $, 
which by Remark \ref{equivalent norms}, is equivalent to condition (i) in Theorem \ref{two}. 

All we need to prove is then to show that the function $ \psi $ in Theorem \ref{two} is in $ \mathcal{L}_{\Pi_T} (f) 
\otimes \mathcal{L}_{\Pi_T} (f) $. For that, note that the function $ \psi $ can be written as 

\begin{equation}\label{eq:psi}
\psi(\omega, \gamma) = \sum_{n>N} \alpha_n K_T(\omega, \gamma^n)\overline{K_T(\lambda, \gamma^n)}.
\end{equation}

Observe that functions of the form $ K_T(., \gamma^n)\overline{K_T(., \gamma^n)} $ belong to the space 
$ \mathcal{L}_{\Pi_T}^e  \otimes \mathcal{L}_{\Pi_T}^e $ since $ K_T(., \gamma^n) $'s are the reproducing 
kernel elements of $ \mathcal{L}_{\Pi_T}^e $. Therefore, if we can show that $ {\Vert \psi \Vert}^2_{f \otimes f} 
< + \infty $, it implies that the function $ \psi $ defined in \eqref{eq:psi} is the $ L^2(f \otimes f) $ limit of the 
partial sums and hence, $ \psi \in \mathcal{L}_{\Pi_T} (f) \otimes \mathcal{L}_{\Pi_T} (f) $ since $\mathcal{L}_{\Pi_T} (f) 
\otimes \mathcal{L}_{\Pi_T} (f) $ is the closure in $ L^2(f \otimes f) $ of the space $ \mathcal{L}_{\Pi_T}^e  \otimes 
\mathcal{L}_{\Pi_T}^e $. To this end, observe that
\[
\begin{split}
{\Vert \psi \Vert}_{f \otimes f}
& \leq  \sum_{n > N}  {\Vert \alpha_n K_T(., \gamma^n)\overline{K_T(., \gamma^n)} \Vert}_{f \otimes f} \\
&= \sum_{n > N} \alpha_n K_T \left( \gamma^n, \gamma^n \right)  \\
&\leq  C  \sum_{n>N} \frac{\alpha_n}{f(\gamma^n)} < + \infty,
\end{split}
\]
by the assumption \eqref{sum}. The proof is complete.
\end{proof}  


Finally, we give another application of Theorem \ref{main}.  We consider the spectral densities of the following form 
\begin{equation}\label{anisotropic spectrals_2}
f(\lambda) = \frac{1}{\left( \sum_{j=1}^{d} |{\lambda}_j|^{H_j} \right)^{Q+2}},
\end{equation}
where $ \lambda = (\lambda_1, . . . , \lambda_d) \in \mathbb{R}^d \backslash \lbrace 0 \rbrace $, 
$ 0<H_j<1 $ for all $ j = 1, . . ., d $, and $ Q = \sum_{j=1}^{d} \frac{1}{H_j}$.  According to Remark 2.2 in \cite{Xue_Xiao_2011}, 
every positive function of the form \eqref{anisotropic spectrals} is comparable to a function of the form \eqref{anisotropic spectrals_2} 
as $|\lambda| \to \infty$. See \cite{Xue_Xiao_2011} for the explicit relationship between the parameters $(\beta_1, \ldots, \beta_d, \gamma)$ 
in \eqref{anisotropic spectrals}
and $(H_1, \ldots, H_d)$ in \eqref{anisotropic spectrals_2}.  \cite{Xue_Xiao_2011} proved that the smoothness and fractal properties 
of a Gaussian random field with spectral density \eqref{anisotropic spectrals} is characterized by the corresponding parameters  $(H_1, \ldots, H_d)$.
The following theorem shows that a similar phenomenon occurs for equivalence of  these Gaussian random fields.

\begin{thm}
Suppose $ f_0 $ and $ f_1 $ are spectral densities of the form \eqref{anisotropic spectrals_2} with parameters 
$ H_j^{0} $ and $ H_j^{1} $ ($ j = 1, . . ., d $), respectively. Then, GRFs with stationary increments and spectral densities 
$ f_0 $ and $ f_1 $ are locally equivalent if and only if $ H_j^0 = H_j^1 $ for all $ j = 1, . . ., d$. 
\end{thm}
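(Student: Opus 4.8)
The plan is as follows. The ``if'' direction is trivial: if $H_j^0=H_j^1$ for every $j$, then $f_0$ and $f_1$ are literally the same function of $\lambda$, so $F_0=F_1$ and the two fields have identical finite-dimensional distributions, hence are locally equivalent. For the ``only if'' direction I would extract a contradiction from the \emph{necessary} part of the equivalence criterion in Theorem~\ref{one} (equivalently Theorem~\ref{two}), namely that local equivalence forces the $F_0$- and $F_1$-norms to be uniformly comparable on $\mathcal{L}^e_{\Pi_T}$, and I would test this condition on the single-direction increments $e_{h\mathbf{e}_j}$, where $\mathbf{e}_j$ denotes the $j$-th coordinate vector of $\mathbb{R}^d$.

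Concretely: fix $T>0$; local equivalence gives equivalence on $\Pi_T$, so there are constants $0<c\le C<\infty$ with $c\,{\Vert\phi\Vert}_{F_1}\le{\Vert\phi\Vert}_{F_0}\le C\,{\Vert\phi\Vert}_{F_1}$ for \emph{all} $\phi\in\mathcal{L}^e_{\Pi_T}$ at once (here $f_0$ satisfies (C1) by the same observation recorded for \eqref{anisotropic spectrals} in Section~\ref{main results}, so Theorem~\ref{two} applies, but in fact only condition (i) of Theorem~\ref{one} is used). For $0<|h|\le T$ the function $e_{h\mathbf{e}_j}$ lies in $\mathcal{L}^e_{\Pi_T}$ and
\[
{\Vert e_{h\mathbf{e}_j}\Vert}_{F_m}^2=\int_{\mathbb{R}^d}\big|e^{i h\lambda_j}-1\big|^2\,f_m(\lambda)\,d\lambda,\qquad m=0,1 .
\]
The key step is to show that this equals $c_{j,m}\,|h|^{2H_j^m}$ for a finite positive constant $c_{j,m}$. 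Writing $f_m(\lambda)=\big(\sum_{k}|\lambda_k|^{H_k^m}\big)^{-(Q_m+2)}$ with $Q_m=\sum_{k=1}^d 1/H_k^m$, I would substitute $\lambda_k=|h|^{-H_j^m/H_k^m}\mu_k$: this sends $\sum_k|\lambda_k|^{H_k^m}$ to $|h|^{-H_j^m}\sum_k|\mu_k|^{H_k^m}$, has Jacobian $|h|^{-H_j^m Q_m}$, and sends $h\lambda_j$ to $\operatorname{sgn}(h)\,\mu_j$, so collecting the powers of $|h|$ yields
\[
{\Vert e_{h\mathbf{e}_j}\Vert}_{F_m}^2=c_{j,m}\,|h|^{2H_j^m},\qquad c_{j,m}:=\int_{\mathbb{R}^d}\big|e^{i\mu_j}-1\big|^2\,f_m^{\ast}(\mu)\,d\mu,
\]
where $f_m^{\ast}(\mu):=\big(\sum_{k}|\mu_k|^{H_k^m}\big)^{-(Q_m+2)}$. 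That $0<c_{j,m}<\infty$ follows because $f_m^{\ast}$ satisfies the integrability condition \eqref{Con} (it decays fast enough at infinity since $Q_m+2>Q_m=\sum_k 1/H_k^m$, and it is tame enough near the origin since each $H_k^m<1$; cf.\ Proposition~2.1 in \cite{Xue_Xiao_2011}), whence $c_{j,m}$ is finite by the elementary bound $|e^{i\mu_j}-1|^2\le C\,|\mu|^2/(1+|\mu|^2)$ (as in \eqref{spectral representation}), while positivity is clear since the integrand is positive off a Lebesgue-null set.

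Combining the two displays, for each $j$ and all $0<|h|\le T$ one gets
\[
c\,\sqrt{c_{j,1}}\;|h|^{H_j^1}\ \le\ \sqrt{c_{j,0}}\;|h|^{H_j^0}\ \le\ C\,\sqrt{c_{j,1}}\;|h|^{H_j^1},
\]
so $|h|^{\,H_j^0-H_j^1}$ stays between two positive constants as $h\to0$, which is possible only if $H_j^0=H_j^1$; since $j\in\{1,\dots,d\}$ is arbitrary, the ``only if'' direction follows. I do not expect a genuine obstacle here --- the only mildly delicate point is the finiteness and positivity of $c_{j,m}$, which is immediate once one records that \eqref{anisotropic spectrals_2} satisfies \eqref{Con} (a fact already implicit in \cite{Xue_Xiao_2011} and used in Lemma~\ref{Lem:41}). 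A less self-contained alternative for ``only if'' would be to use the invariance of almost-sure sample-path properties under equivalent Gaussian measures together with the result of \cite{Xue_Xiao_2011} that the directional Hölder regularity of a field with spectral density \eqref{anisotropic spectrals_2} is governed by the exponents $H_j$; but the direct spectral computation above relies only on the material of Section~\ref{main results}.
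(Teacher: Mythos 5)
Your proposal is correct and follows essentially the same route as the paper: both directions reduce to testing the necessary norm-comparability condition (i) of Theorem~\ref{one} on coordinate-direction increments $e_{h\mathbf{e}_j}$ and letting $h\to 0$, where the ratio ${\Vert e_{h\mathbf{e}_j}\Vert}^2_{f_1}/{\Vert e_{h\mathbf{e}_j}\Vert}^2_{f_0}$ behaves like $|h|^{2(H_j^1-H_j^0)}$. The only difference is that you establish the exact scaling ${\Vert e_{h\mathbf{e}_j}\Vert}^2_{F_m}=c_{j,m}|h|^{2H_j^m}$ by an explicit change of variables, whereas the paper cites the two-sided bound of Lemma~3.2 in \cite{Xue_Xiao_2011}; your version is slightly more self-contained but otherwise identical in substance.
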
  
\begin{proof} The sufficiency is obvious, so we only need to prove the necessity. 
Suppose for some $ k \in \lbrace 1, ..., d\rbrace $, $ H_k^0 < H_k^1 $. By Lemma 3.2 in \cite{Xue_Xiao_2011}, 
there exist $ c_1, c_2 > 0 $, 
such that for all $ t \in \mathbb{R}^d $
\begin{equation}\label{eq:norms_compare}
c_1 \sum_{j=1}^{d} {\vert t \vert}^{2H_j^i} \leq {\Vert e_t \Vert}^2_{f_i} \leq c_2 \sum_{j=1}^{d} {\vert t \vert}^{2H_j^i},
\end{equation}
for $ i = 0, 1 $. If we simply choose $ t \in \mathbb{R}^d $ with $ t_k = l $, and $ t_j = 0 $ for $ j \neq k $, we get
\begin{equation*}
\frac{{\Vert e_t \Vert}^2_{f_1}}{{\Vert e_t \Vert}^2_{f_0}} \leq \frac{c_2}{c_1} l^{2 \left( H_k^1 - H_k^0 \right)} 
\rightarrow 0 \hspace{1em} as \hspace{0.5em} l \rightarrow 0.
\end{equation*}
This violates the necessary condition for equivalence of Gaussian measures in Theorem \ref{one}.
\end{proof}


\bibliography{Safikhani_Reference}
\end{document}